\newif\ifshowkeys
\theoremstyle{plain}
\newtheorem{theorem}{Theorem}
\newtheorem{pretheorem}{Theorem}
\newtheorem{lemma}{Lemma}
\theoremstyle{definition}
\newtheorem{remark}{Remark}
\crefname{section}{Section}{Sections}
\crefname{theorem}{Theorem}{Theorems}
\crefname{pretheorem}{Theorem}{Theorems}
\crefname{corollary}{Corollary}{Corollaries}
\crefname{lemma}{Lemma}{Lemmas}
\crefname{definition}{Definition}{Definitions}
\crefname{remark}{Remark}{Remarks}
\crefname{example}{Example}{Examples}
\renewcommand{\mod}[1]{(\mathrm{mod}\ #1)}
\renewcommand{\epsilon}{\varepsilon}
\renewcommand{\phi}{\varphi}
\DeclareMathOperator{\Li}{\mathrm{Li}}
\def\showsize#1{\setbox0=\hbox{#1}width=\the\wd0, height=\the\ht0, depth=\the\dp0}
\begin{document}

\title[On the distribution of products of two primes]
{On the distribution of products of two primes}
\author[S. Saad Eddin]{Sumaia Saad Eddin}
\address[S. Saad Eddin]{%
Institute of Financial Mathematics and Applied Number Theory\\
Johannes Kepler University\\
Altenbergerstrasse 69\\4040 Linz\\Austria.}
\email{sumaia.saad\_eddin@jku.at}
\author[Y. Suzuki]{Yuta Suzuki}
\address[Y. Suzuki]{%
Graduate School of Mathematics\\
Nagoya University\\
Chikusa-ku\\Nagoya 464-8602\\Japan.}
\email{m14021y@math.nagoya-u.ac.jp}
\date{}

\subjclass[2010]{Primary 11N25, Secondary 11N69}
\keywords{RSA integers, distribution of prime numbers.}

\maketitle

\begin{abstract}
For a real parameter $r$,
the RSA integers are integers which can be written
as the product of two primes $pq$ with $p<q\le rp$,
which are named after the importance
of products of two primes in the RSA-cryptography.
Several authors obtained the asymptotic formulas
of the number of the RSA integers.
However, the previous results on the number of the RSA integers
were valid only in a rather restricted range
of the parameter $r$.
Dummit, Granville and Kisilevsky found some bias
in the distribution of products of two primes with congruence conditions.
Moree and the first author studied some similar bias in the RSA integers,
but they proved that at least for fixed $r$, there is no such bias.
In this paper, we provide an asymptotic formula for the number of the RSA integers
available in wider ranges of $r$, and give some observations
of the bias of the RSA integers,
by interpolating the results
of Dummit, Granville and Kisilevsky
and of Moree and the first author.
\end{abstract}

\section{Introduction}
\label{section:intro}
Let $\pi_2(x)$ be the number of products of two distinct primes, i.e.\ 
\[
\pi_2(x)\coloneqq\sum_{\substack{pq\le x\\p<q}}1.
\]
For this function $\pi_2(x)$,
Landau \cite{Landau,Landau2} proved
\[
\pi_2(x)\sim\frac{x\log\log x}{\log x}
\]
as $x\to\infty$.
He also proved more precise asymptotic formulas, e.g.
\begin{equation}
\label{Landau_asymp}
\pi_2(x)=\frac{x\log\log x}{\log x}+O\left(\frac{x}{\log x}\right).
\end{equation}
In the RSA cryptography,
products of two distinct primes play an important role.
For a real parameter $r>1$,
Decker and Moree \cite{Decker_Moree} introduced
the RSA integers to be integers which can be written
as the product of two primes $pq$ with $p<q\le rp$,
and they studied the distribution of the RSA integers.
Define 
\[
\pi_{2}(x;r)\coloneqq\# \left\{{pq \le x}:{p<q} \le rp\right\}.
\]
They proved that 
\begin{equation}
\label{Decker_Moree}
\pi_{2}(x;r)
=
\frac{2x\log r}{(\log x)^2}+O\left(\frac{rx\log2r}{(\log x)^3}\right).
\end{equation}
Justus~\cite{Justus} also studied asymptotic behavior
of the number of the RSA integers and of its variants.
In particular, Justus dealt with the case $r$ is not so close to $x$ but also relatively large.
By following Justus' argument~\cite[Theorem~2.1]{Justus} with keeping uniformity, we can prove
\begin{equation}
\label{Justus}
\pi_2(x;r)
=
\frac{x}{\log x}
\left(\log\log xr-\log\log\frac{x}{r}\right)
+O\left(\frac{x}{(\log x)(\log\frac{x}{r})}\right).
\end{equation}
In this result, the main term majorizes the error term
in the range $x^{\delta}\le r\le x/4$ for a fixed $\delta>0$. By \cref{lem:to_Landau} below,
the formula \cref{Justus} is reduced to Landau's result~\cref{Landau_asymp} when $r=x/4$.
However, the formula \cref{Justus} falls short of interpolating
\cref{Landau_asymp} and \cref{Decker_Moree}.

Recently, Moree and the first author \cite[Corollary~1.3]{Moree_SaadEddin}
obtained
\begin{equation}
\label{Moree_SaadEddin}
\pi_{2}(x;r)=F_r(x)+O\left(rx\exp\left(-c\sqrt{\log x}\right)\right),
\end{equation}
where $c>0$ is some absolute constant and
\[
F_r(x)\coloneqq\int_{2r}^{x}
\left(\log\log ur-\log\log\frac{u}{r}\right)
\frac{du}{\log u}.
\]
This gives an approximation better than \cref{Decker_Moree} in the error term aspect.
In their paper,
the authors tried to observe some bias
in the distribution of the RSA integers with some congruence conditions,
motivated by a recently observed bias \cite{Dummit_Granville_Kisilevsky}
in products of two prime numbers with congruence conditions.
Consider the ratio 
\begin{equation}
r(x)= \#\left\{{pq\le x}:{p\equiv q\equiv 3\,\mod{4}}\right\}
/\tfrac{1}{4}\# \left\{ pq\leq x \right\}.
\end{equation}
One would expect that $r(x)$ converges rapidly to $1$.
However, Dummit, Granville and Kisilevsky~ \cite{Dummit_Granville_Kisilevsky}
pointed out by numerical calculations
that this convergence is surprisingly slow.
They proved that such ratios indeed converge rather slowly because of the existence of the secondary main term
(we stated the theorem in a slightly different way,
but it is easy to prove this theorem via the original argument): 
\begin{pretheorem}[{\cite[Theorem~1.1]{Dummit_Granville_Kisilevsky}}]
\label{thm:DGK}
Let $\chi$ be a quadratic character $\mod{Q}$ and $\eta=\pm1$.
Then, for $x\ge4$, we have
\[
\frac
{\#\{{pq\le x}:{\chi(p)=\chi(q)=\eta}\}}
{\#\{{pq\le x}:{(pq,Q)=1}\}}
=
\frac{1}{4}\left(1+H_{\chi,\eta}(x)\right),
\]
where $H_{\chi,\eta}(x)$ is given by
\[
H_{\chi,\eta}(x)
=
\frac{\eta}{\log\log x}
\left(
\mathcal{L}_{\chi}
\left(1+O\left(\frac{1}{\log\log x}\right)\right)
+
O\left(\frac{1}{\log x}\right)\right),\quad
\mathcal{L}_{\chi}
\coloneqq
\sum_{p}\frac{\chi(p)}{p}
\]
and the implicit constant depends on the modulus $Q$.
\end{pretheorem}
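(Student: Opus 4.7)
The plan is to exploit the character identity for $\eta=\pm1$, reduce the problem to a single-character sum analysed via Siegel--Walfisz and partial summation, and finally divide by the Landau-type count \cref{Landau_asymp}. Since $\eta\in\{\pm1\}$ and $\chi^{2}$ is principal, for primes $p,q\nmid Q$ one has
\[
\mathbf{1}[\chi(p)=\chi(q)=\eta]
=\frac{(1+\eta\chi(p))(1+\eta\chi(q))}{4}
=\tfrac14\bigl(1+\eta\chi(p)+\eta\chi(q)+\chi(pq)\bigr).
\]
The left-hand side is zero whenever $p\mid Q$ or $q\mid Q$, so summing over unordered pairs $\{p,q\}$ with $pq\le x$ yields
\[
\#\{pq\le x:\chi(p)=\chi(q)=\eta\}=\tfrac14\bigl(D(x)+\eta E(x)+F(x)\bigr),
\]
with $D(x)=\#\{pq\le x:(pq,Q)=1\}$, $E(x)=\sum_{pq\le x,\,(pq,Q)=1}(\chi(p)+\chi(q))$, and $F(x)=\sum_{pq\le x,\,(pq,Q)=1}\chi(pq)$, so $H_{\chi,\eta}(x)=\eta\,E(x)/D(x)+F(x)/D(x)$.

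Up to an $O(\sqrt{x})$ error from diagonal terms, $E(x)=\sum_{p\le x/2}\chi(p)\pi(x/p)$, the finitely many primes dividing $Q$ contributing only $O(x/\log x)$. I would split at $p=\sqrt{x}$: for $p>\sqrt{x}$, switching the order of summation gives $\sum_{q<\sqrt{x}}\sum_{\sqrt{x}<p\le x/q}\chi(p)$, and Siegel--Walfisz bounds the inner sum by $O((x/q)\exp(-c\sqrt{\log x}))$, yielding an overall $O(x(\log\log x)\exp(-c\sqrt{\log x}))$. For $p\le\sqrt{x}$, substituting $\pi(x/p)=(x/p)/\log(x/p)+O((x/p)/(\log x)^{2})$ reduces the main contribution to
\[
x\sum_{p\le\sqrt{x}}\frac{\chi(p)}{p\log(x/p)}.
\]
Using $S(y)\coloneqq\sum_{p\le y}\chi(p)/p=\mathcal{L}_\chi+O(\exp(-c\sqrt{\log y}))$ (a consequence of Siegel--Walfisz via one partial summation), I apply Abel summation with weight $f(t)=1/\log(x/t)$. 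The boundary term $f(\sqrt{x})S(\sqrt{x})$ contributes $2\mathcal{L}_\chi/\log x+O(\exp(-c\sqrt{\log x}))$, while $-\int_{2}^{\sqrt{x}}S(t)\,f'(t)\,dt$ equals $-\mathcal{L}_\chi(f(\sqrt{x})-f(2))+O(1/(\log x)^{2})=-\mathcal{L}_\chi/\log x+O(1/(\log x)^{2})$ after the telescoping; the two combine to $\mathcal{L}_\chi/\log x+O(1/(\log x)^{2})$, giving
\[
E(x)=\frac{\mathcal{L}_\chi\,x}{\log x}+O\!\left(\frac{x\log\log x}{(\log x)^{2}}\right).
\]

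For $F(x)$, writing it as $\sum_{p}\chi(p)\sum_{q\le x/p}\chi(q)$ and applying Siegel--Walfisz to the inner sum (interchanging the order for $p>\sqrt{x}$) gives $F(x)\ll x(\log\log x)\exp(-c\sqrt{\log x})$. Finally \cref{Landau_asymp} yields $D(x)=(x\log\log x)/\log x+O(x/\log x)$. Dividing and expanding $(\log\log x+O(1))^{-1}=(\log\log x)^{-1}(1+O(1/\log\log x))$ gives
\[
\eta\,\frac{E(x)}{D(x)}=\frac{\eta\,\mathcal{L}_\chi}{\log\log x}\bigl(1+O(1/\log\log x)\bigr)+O\!\left(\frac{1}{(\log x)(\log\log x)}\right),
\]
and $F(x)/D(x)$ is smaller than any negative power of $\log x$; assembling these yields the stated expression for $H_{\chi,\eta}(x)$.

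The main obstacle is the cancellation in the partial-summation step: both the boundary term and the integral have size $\asymp\mathcal{L}_\chi/\log x$, and one must track the cancellation $2-1=1$ so that the leading constant is exactly $\mathcal{L}_\chi$ rather than $2\mathcal{L}_\chi$. The Siegel--Walfisz input $S(y)=\mathcal{L}_\chi+O(\exp(-c\sqrt{\log y}))$ is essential here, and its implicit constant, hence that in the statement, depends ineffectively on $Q$ via Siegel's theorem.
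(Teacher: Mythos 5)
Your decomposition via $\mathbf{1}[\chi(p)=\chi(q)=\eta]=\tfrac14(1+\eta\chi(p))(1+\eta\chi(q))$, the evaluation of the linear character sum $E(x)$ by Siegel--Walfisz plus partial summation against $S(y)=\sum_{p\le y}\chi(p)/p$, the exponential bound for $F(x)$, and division by the Landau count \cref{Landau_asymp} is exactly the Dummit--Granville--Kisilevsky argument that the paper invokes (and that reappears, uniformly in $r$, in the proof of \cref{thm:bias}). The outline is sound, and your tracking of the $2-1=1$ cancellation between the boundary term and the integral in the Abel summation is the right key point.

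There are, however, two quantitative steps that as written do not deliver the stated error term. First, you discard the coprimality condition on the second variable at the cost of ``$O(x/\log x)$''; but $E(x)$ itself has main term $\mathcal{L}_\chi x/\log x$, so an $O(x/\log x)$ error would swallow it. The removal must instead be done through the character: the discrepancy is $\sum_{q\mid Q}\sum_{p\le x/q}\chi(p)\ll xe^{-c\sqrt{\log x}}$ by Siegel--Walfisz, which is harmless. Second, replacing $\pi(x/p)$ by $(x/p)/\log(x/p)+O((x/p)/(\log x)^2)$ and bounding the error trivially over $p\le\sqrt{x}$ costs $O(x\log\log x/(\log x)^2)$ --- precisely the asymptotic for $E(x)$ you state --- and dividing by $D(x)\asymp x\log\log x/\log x$ then yields an inner error $O(1/\log x)$ only in the weaker form $O(\log\log x/\log x)$; your subsequent claim that $\eta E(x)/D(x)$ carries an error $O(1/((\log x)\log\log x))$ does not follow from your own formula for $E(x)$. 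To recover the stated $O(1/\log x)$ inside the parentheses one must keep the secondary term of the prime number theorem explicitly, i.e.\ write $\pi(y)=y/\log y+\Li_2(y)+O(ye^{-c\sqrt{\log y}})$ and estimate $\sum_{p\le\sqrt x}\chi(p)\Li_2(x/p)\ll x/(\log x)^2$ by the same partial summation against $\pi(\cdot,\chi)$ that you already use for the main term; this is how the paper handles the corresponding $S_{32}$ in the proof of \cref{thm:bias}. Both repairs use only tools already present in your argument, so the proof is salvageable without structural change.
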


In \cite{Moree_SaadEddin},
no such bias was found in the distributions of the RSA integers at least for fixed $r$.
However, since $\pi_2(x;r)$ is reduced to $\pi_2(x)$ for large $r$
as we shall see in \cref{lem:to_Landau} below and there is a bias for the products of two primes,
some bias should show up even for the RSA integers
when $r$ is large enough.
In this sense, the behavior of $\pi_2(x;r)$ in the $r$-aspect is actually important, which is thought to be not very important in \cite{Moree_SaadEddin}.

The preceding studies of the RSA integers
are still not so satisfactory in this $r$-aspect.
First, Justus' formula \cref{Justus} is available in a rather wide range of $r$ but it fails to interpolate  \cref{Landau_asymp} and
\cref{Decker_Moree}.
Namely, Justus' formula \cref{Justus} is not available for $r$ not so large.
On the other hand, as it is already mentioned in \cite{Decker_Moree} and \cite{Moree_SaadEddin},
the asymptotic formulas \cref{Decker_Moree}
and \cref{Moree_SaadEddin} are not available for large $r$ compared to $x$.
For $r\ge2$, the main term of \cref{Moree_SaadEddin} is bounded as
\[
F_r(x)
\ll
\int_{2r}^{x}\frac{\log\log ur}{\log u}du
\ll
x\log\log r.
\]
Thus, in order to make the main term being of larger magnitude than the error term,
we need to assume at least that
\[
x\log\log r\gg rx\exp\left(-c\sqrt{\log x}\right),
\]
which is roughly equivalent to
\[
r\ll\exp\left(c\sqrt{\log x}\right).
\]
By \cref{lem:to_Landau} below,
the RSA integers are reduced to the products of two primes
for $r\ge x/4$, so this restriction can be too strong to
find bias in the RSA integers.
Furthermore, though it is of rather less importance,
the previous asymptotic formulas are not available for $r$ very close to $1$.
Indeed, if $1<r\le2$, then we have
\[
\log r\asymp(r-1)
\]
and if further $x$ is sufficiently large, then
\begin{align}
F_r(x)
&=\int_{2r}^{x}
\left(\log\log ur-\log\log\frac{u}{r}\right)
\frac{du}{\log u}\\
&=2\int_{r^2}^{x}\left(\sum_{\ell=0}^{\infty}\frac{1}{2\ell+1}\left(\frac{\log r}{\log u}\right)^{2\ell+1}\right)\frac{du}{\log u}+O(1)\\
&\asymp(\log r)\int_{2r}^{x}\frac{du}{(\log u)^2}
\asymp\frac{x\log r}{(\log x)^2}\asymp\frac{(r-1)x}{(\log x)^2}.
\end{align}
Thus both of the asymptotic formulas \cref{Decker_Moree} and \cref{Moree_SaadEddin} has the main term of the size
\[
\asymp\frac{(r-1)x}{(\log x)^2}
\]
and the error term estimate of the size
\[
\gg x\exp(-c\sqrt{\log x}).
\]
Therefore, in order to make the main term being of larger magnitude than the error term,
we need to assume at least%
\footnote{It seems that the assumption $(r-1)^{-1}=o(\log x)$
is missing in Corollary 1 of \cite{Decker_Moree}.}
\[
\frac{(r-1)x}{(\log x)^2}\gg x\exp(-c\sqrt{\log x})
\]
which is roughly equivalent to
\[
r\ge1+C\exp(-c\sqrt{\log x})
\]
with some absolute constant $C>0$.

The first main aim of this paper is to obtain asymptotic formulas
for the number of the RSA integers which is valid in wider ranges of $r$.
In order to determine when the bias of the RSA integers appears,
it is desired to interpolate the asymptotic formulas for $\pi_{2}(x;r)$
and Landau's asymptotic formula \cref{Landau_asymp}.
For the case $r$ is large, we have the following
refinement of the result of Moree and the first author:
\begin{theorem}
\label{thm:large_int}
For $1\le r\le x/4$, we have
\[
\pi_{2}(x;r)
=
\int_{4r}^{x}\left(\log\log ur-\log\log\frac{u}{r}\right)\frac{du}{\log u}
+\frac{4r\log\log4r}{\log4r}
+O\left(\frac{x}{\log x}e^{-c\sqrt{\log\frac{x}{r}}}\right),
\]
where the constant $c>0$ and the implicit constant
are absolute.
\end{theorem}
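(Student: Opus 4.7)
My plan is to follow a Landau-style decomposition. The condition $p < q \le \min(rp, x/p)$ (equivalent to the defining condition of $\pi_2(x;r)$) splits naturally at $p = \sqrt{x/r}$, where both upper bounds meet at the common value $\sqrt{rx}$. Accordingly I would write
\[
\pi_2(x;r) = S_1 + S_2, \quad
S_1 := \sum_{p \le \sqrt{x/r}}\bigl(\pi(rp) - \pi(p)\bigr), \quad
S_2 := \sum_{\sqrt{x/r} < p \le \sqrt{x}}\bigl(\pi(x/p) - \pi(p)\bigr),
\]
and then substitute the prime number theorem in its classical form
\[
\pi(y) = \int_2^y \frac{du}{\log u} + O\bigl(y\exp(-c\sqrt{\log y})\bigr)
\]
into each difference. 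Partial summation, combined with a second application of the PNT to rewrite $d\pi(t) = dt/\log t + dE(t)$, then converts the outer sum over primes into a weighted integral plus controlled errors.

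What remains for the main term is a double integral of $du\,dt/(\log u \log t)$ over the region in the $(t, u)$-plane bounded below by $u = t$, on the right by $t = \sqrt{x}$, and above by a piecewise boundary (equal to $rt$ for $t \le \sqrt{x/r}$ and $x/t$ for $t \ge \sqrt{x/r}$). Interchanging the order of integration so that the product $u$ is the outer variable, and using the substitution $s = \log t$ together with the partial-fraction identity
\[
\frac{1}{s(\log u - s)} = \frac{1}{\log u}\!\left(\frac{1}{s} + \frac{1}{\log u - s}\right),
\]
the inner $t$-integral evaluates, in the main range $u \in [4r, x]$, to $\frac{1}{\log u}\bigl(\log\log(ur) - \log\log(u/r)\bigr)$, producing the claimed main integral. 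The isolated correction $\frac{4r\log\log 4r}{\log 4r}$ arises from the small-$u$ strip $u \in [4, 4r]$, where the lower limit of the $t$-integration is truncated to $t = 2$; a careful expansion of the logarithmic integral $\int_2^y du/\log u = y/\log y + y/(\log y)^2 + O(y/(\log y)^3)$ beyond its leading order reveals the stated closed form.

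The main obstacle is controlling the cumulative PNT errors so as to obtain the sharp decay $\exp(-c\sqrt{\log(x/r)})$ rather than the weaker $\exp(-c\sqrt{\log x})$ that a na\"ive approach would give. The critical contribution comes from $S_1$ near its upper endpoint $p \asymp \sqrt{x/r}$: applying the PNT to $\pi(rp)$ there yields a per-term error $\asymp \sqrt{rx}\exp(-c\sqrt{\log\sqrt{rx}})$, which when summed over $\pi(\sqrt{x/r}) \ll \sqrt{x/r}/\log(x/r)$ primes and combined with the favourable bound $\log(rx) \ge \log(x/r)$ produces the desired $(x/\log x)\exp(-c\sqrt{\log(x/r)})$ after a rebalancing of the constant in the exponential. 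A secondary subtlety is the precise identification of the $\frac{4r\log\log 4r}{\log 4r}$ correction among the various boundary contributions, which requires expanding the inner integral to the relevant order rather than stopping at the leading term.
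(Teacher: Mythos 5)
Your decomposition is exactly the paper's: split at $p=\sqrt{x/r}$ so that
\[
\pi_2(x;r)=\sum_{p\le\sqrt{x/r}}\pi(rp)+\sum_{\sqrt{x/r}<p\le\sqrt{x}}\pi\left(\frac{x}{p}\right)-\sum_{p\le\sqrt{x}}\pi(p),
\]
then apply the prime number theorem and partial summation to each piece, arriving at a main term $G_r(x)=\int_{2}^{\sqrt{x/r}}\frac{\Li(ru)}{\log u}du+\int_{\sqrt{x/r}}^{\sqrt{x}}\frac{\Li(x/u)}{\log u}du-\frac12\Li(\sqrt{x})^2$. Where you genuinely differ is in massaging $G_r(x)$ into the stated form: you interchange the order of integration in the underlying double integral, take the product as the outer variable, and evaluate the inner integral by partial fractions, identifying the strip where the product lies in $[4,4r]$ (lower limit of the $t$-integration pinned at $2$) as the source of $\frac{4r\log\log 4r}{\log 4r}$. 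The paper instead differentiates: the boundary terms cancel, leaving $\frac{dG_r}{dx}=\left(\log\log xr-\log\log\frac{x}{r}\right)\frac{1}{\log x}$, so $G_r(x)=G_r(4r)+\int_{4r}^{x}(\cdots)\frac{du}{\log u}$, and $G_r(4r)$ is then computed directly. The two computations are equivalent — your small-product strip \emph{is} $G_r(4r)$ — but the differentiation trick spares you from tracking the region of integration explicitly, which is where your route is most error-prone.

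There is one step in your error bookkeeping that fails as stated. For the PNT error in $\sum_{p\le\sqrt{x/r}}\pi(rp)$ you reach a bound of the shape $\frac{x}{\log(x/r)}\exp\left(-c'\sqrt{\log rx}\right)$ and propose to deduce $\frac{x}{\log x}\exp\left(-c''\sqrt{\log(x/r)}\right)$ from $\log(rx)\ge\log(x/r)$ plus a rebalancing of constants. That leaves an uncancelled factor $\frac{\log x}{\log(x/r)}$, which is unbounded when $r\asymp x$ (then $\log(x/r)\asymp 1$, so the exponential in $\sqrt{\log(x/r)}$ gives no saving at all). You must use the stronger inequality $\log(rx)\ge\log x$: writing $\exp(-c'\sqrt{\log rx})\le\exp(-\tfrac{c'}{2}\sqrt{\log x})\exp(-\tfrac{c'}{2}\sqrt{\log(x/r)})$, the first factor absorbs any power of $\log x$ and the second gives the stated decay; this is precisely how the paper's \cref{lem:sum2} proceeds. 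Relatedly, the term that actually forces the exponent $\sqrt{\log(x/r)}$ (rather than $\sqrt{\log x}$) is not the one you single out: it is the partial-summation boundary term $\Li(\sqrt{xr})R_1(\sqrt{x/r})\ll\frac{x}{\log x}e^{-c\sqrt{\log(x/r)}}$, coming from the \emph{second} application of the PNT — to the counting function of the outer primes $p$ at the cut $p=\sqrt{x/r}$ — not from approximating $\pi(rp)$ itself.
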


Note that by the same argument as above,
we can find that \cref{thm:large_int} has meaning
only in the range $1+C\exp(-c\sqrt{\log x})\le r\le x/4$
for some absolute constant $C>0$.
We may simplify this theorem to obtain
the following asymptotic formula:

\begin{theorem}
\label{thm:large}
For $1+\exp(-c\sqrt{\log x})\le r\le x/4$, we have
\[
\pi_{2}(x;r)=\frac{x}{\log x}\left(\log\log xr-\log\log \frac{x}{r}\right)
+O\left(\frac{x\log r}{(\log x)^2(\log \frac{x}{r})}\right),
\]
where the constant $c>0$ and the implicit constant
are absolute.
\end{theorem}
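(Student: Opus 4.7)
The plan is to deduce \cref{thm:large} from \cref{thm:large_int} by simplifying the integral main term, choosing the constant $c$ in \cref{thm:large} strictly smaller than the corresponding constant (call it $c_{0}$) of \cref{thm:large_int}. Writing $G(u)\coloneqq\log\log ur-\log\log(u/r)$ and $E\coloneqq x\log r/((\log x)^{2}\log(x/r))$ for the target error, the first step is integration by parts via the identity
\[
\frac{d}{du}\!\left(\frac{uG(u)}{\log u}\right)=\frac{G(u)}{\log u}+\frac{uG'(u)}{\log u}-\frac{G(u)}{(\log u)^{2}},
\]
which yields
\[
\int_{4r}^{x}\frac{G(u)}{\log u}\,du=\frac{xG(x)}{\log x}-\frac{4rG(4r)}{\log 4r}-\int_{4r}^{x}\frac{uG'(u)}{\log u}\,du+\int_{4r}^{x}\frac{G(u)}{(\log u)^{2}}\,du.
\]
Combining the boundary term at $u=4r$ with the extra $4r\log\log 4r/\log 4r$ from \cref{thm:large_int} gives $4r(\log\log 4r-G(4r))/\log 4r$; a short calculation shows $\log\log 4r-G(4r)=O(1)$ uniformly in $r\ge 1$, so this boundary contribution is $O(r/\log 4r)$, which is $O(E)$ in the range $1\le r\le x/4$ (the tightest case being $r\asymp x/4$, where both sides are of order $x/\log x$).

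For the two remaining integrals, the explicit formula $uG'(u)=-2\log r/(\log ur\cdot\log(u/r))$ together with the mean value bound $0\le G(u)\le 2\log r/\log(u/r)$ shows that both are majorised by $2\log r\cdot J$, where $J\coloneqq\int_{4r}^{x}du/((\log u)^{2}\log(u/r))$. The technical heart of the argument is the estimate $J\ll x/((\log x)^{2}\log(x/r))$, which will give $\log r\cdot J=O(E)$. I would prove this by the substitution $u=rv$ and splitting the $v$-interval at $v=r$: on $[4,\min(r,x/r)]$ the factor $\log rv\asymp\log r$ is nearly constant, so the piece contributes $\ll r^{2}/(\log r)^{3}$; on $[r,x/r]$ (present only for $r\le\sqrt x$) one has $\log rv\asymp\log v$, and the piece contributes $\ll x/(\log(x/r))^{3}$. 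Writing $r=x^{\alpha}$ then confirms that both contributions are $O(x/((\log x)^{2}\log(x/r)))$ uniformly for $1\le r\le x/4$.

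Finally, the exponential remainder $\frac{x}{\log x}e^{-c_{0}\sqrt{\log(x/r)}}$ of \cref{thm:large_int} must be shown to be $O(E)$. This reduces to $\log x\cdot\log(x/r)\cdot e^{-c_{0}\sqrt{\log(x/r)}}\ll\log r$, whose tightest case is $r=1+e^{-c\sqrt{\log x}}$, where $\log r\asymp e^{-c\sqrt{\log x}}$ and $\log(x/r)\sim\log x$; here the inequality follows from $c<c_{0}$, the excess $e^{(c-c_{0})\sqrt{\log x}}$ easily absorbing any power of $\log x$. The main obstacle is the uniform verification of $J\ll x/((\log x)^{2}\log(x/r))$ as $r$ ranges from just above $1$ up to $x/4$; the parameter count with $r=x^{\alpha}$ on the two pieces of the split is the cleanest way to handle the intermediate range where $r\asymp\sqrt x$ and the two contributions are of comparable size.
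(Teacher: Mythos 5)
Your overall route is the paper's own: deduce the statement from \cref{thm:large_int} by integrating the main term by parts, reduce everything to the single estimate $\log r\int_{4r}^{x}du/((\log u)^{2}\log\frac{u}{r})\ll x\log r/((\log x)^{2}\log\frac{x}{r})$, and absorb the boundary term $O(r/\log 4r)$ and the exponential remainder (the paper does the latter two via \cref{lem:r_logr} and \cref{lem:x_logx_exp}, using exactly the mechanism you describe: the constant in the hypothesis on $r$ is taken smaller than the constant in the exponential error term). Your integration by parts against $u/\log u$ instead of $\Li(u)$, the identities $uG'(u)=-2\log r/(\log ur\cdot\log\frac{u}{r})$ and $0\le G(u)\le 2\log r/\log\frac{u}{r}$, and your treatment of the boundary and exponential terms are all correct and equivalent to the paper's computation.

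The one step that fails as written is the estimate of $J$ when $\sqrt{x}<r\le x/4$. After the substitution $u=rv$ the range is $[4,x/r]$, which for $r>\sqrt{x}$ lies entirely inside your ``first piece''; bounding that piece by $r^{2}/(\log r)^{3}$ (i.e., in effect extending the upper limit from $x/r$ up to $r$) is far too lossy there: at $r=x/4$ it gives $\asymp x^{2}/(\log x)^{3}$, whereas the target is $\asymp x/(\log x)^{2}$, so the assertion that ``writing $r=x^{\alpha}$ confirms both contributions are $O(x/((\log x)^{2}\log(x/r)))$ uniformly for $1\le r\le x/4$'' is false for $\alpha>1/2$. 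The repair is immediate: keep the true upper limit $\min(r,x/r)$, so that for $r>\sqrt{x}$ the piece is
\[
\ll\frac{r}{(\log r)^{2}}\int_{4}^{x/r}\frac{dv}{\log v}
\ll\frac{r}{(\log r)^{2}}\cdot\frac{x/r}{\log\frac{x}{r}}
=\frac{x}{(\log r)^{2}\log\frac{x}{r}}
\asymp\frac{x}{(\log x)^{2}\log\frac{x}{r}},
\]
since $\log r\asymp\log x$ in that range. (The paper sidesteps the issue by splitting the $u$-integral at $\max(4r,\sqrt{x})$ instead, estimating the lower range trivially by $\sqrt{x}$ and the upper range by $\frac{r}{(\log x)^{2}}\Li(\frac{x}{r})$.) With that correction your argument goes through and gives the stated theorem.
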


For $r$ very close to $1$,
we can still have the following asymptotic formula:

\begin{theorem}
\label{thm:small}
For $1+x^{-5/12}<r\le 3/2$ we have 
\begin{equation}
\pi_{2}(x;r)= \frac{2x\log r}{(\log x)^2}+O\left( \frac{x\log r}{(\log x)^{5/2}}(\log\log x)\right),
\end{equation}
where the implicit constant is absolute.
\end{theorem}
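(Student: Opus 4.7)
The plan is to parameterize $\pi_2(x;r)$ by the smaller prime $p$. Setting $y = \sqrt{x/r}$, for a pair $(p,q)$ with $p < q \le rp$ and $pq \le x$, the constraint $rp \le x/p$ is equivalent to $p \le y$, which yields
\[
\pi_2(x;r) = S_1 + S_2, \quad
S_1 = \sum_{p \le y}(\pi(rp) - \pi(p)), \quad
S_2 = \sum_{y < p \le \sqrt{x}}(\pi(x/p) - \pi(p)).
\]

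To extract the main term I would replace $\pi$ by $\mathrm{li}$ and use partial summation to convert each sum over primes to a double integral. For $S_1$, substituting $s = q/p$ in the inner integral $\int_p^{rp} du/\log u$ and expanding $1/\log(ps)$ as a series in $\log s/\log p$ reduces the computation to standard PNT estimates for sums like $\sum_{p\le y} p/(\log p)^j$, giving
\[
S_1^{\mathrm{main}} = \frac{2(r-1)x}{(\log x)^2} - \frac{2(r-1)^2 x}{(\log x)^2} + O\!\left(\frac{(r-1)x}{(\log x)^3}\right).
\]
For $S_2$, the substitution $p = \sqrt{x/t}$ with $t \in [1,r]$ reduces the integrand to a multiple of $t^{-1} - t^{-2}$, producing
\[
S_2^{\mathrm{main}} = \frac{2x(\log r + 1/r - 1)}{(\log x)^2} + O\!\left(\frac{(r-1)x}{(\log x)^3}\right) = \frac{x(r-1)^2}{(\log x)^2} + O\!\left(\frac{(r-1)x}{(\log x)^3}\right).
\]
Taylor-expanding $\log r = (r-1) - (r-1)^2/2 + O((r-1)^3)$, the sum $S_1^{\mathrm{main}} + S_2^{\mathrm{main}}$ telescopes to $2x\log r/(\log x)^2$ up to errors of size $O((r-1)x/(\log x)^3)$, which are absorbable into the target error $O(x\log r(\log\log x)/(\log x)^{5/2})$.

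The crucial step is to control the PNT error $\sum_{p \le y}[E(rp) - E(p)]$ with $E(t) = \pi(t) - \mathrm{li}(t)$, and its analogue for $S_2$. Here the hypothesis $r - 1 > x^{-5/12}$ is tuned to match Huxley's exponent $7/12 = 1 - 5/12$ in the prime number theorem for short intervals. Each interval $(p,rp]$ has length $(r-1)p$, so Huxley's pointwise bound $\pi(t + h) - \pi(t) \sim h/\log t$ for $h \ge t^{7/12+\varepsilon}$ would demand $(r-1) \ge p^{-5/12+\varepsilon}$, which fails for much of the relevant range of $p$. I therefore plan to use a mean-square variant of the short-interval PNT, or equivalently the explicit formula combined with Huxley's zero-density bound $N(\sigma,T) \ll T^{(12/5)(1-\sigma)+\varepsilon}$: a dyadic decomposition of $p$, Cauchy--Schwarz against such a mean-square bound, and a Mertens-type partial summation (producing the $\log\log x$ factor via $\sum_{p \le y} 1/p \sim \log\log y$) should deliver the claimed error.

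The main obstacle is precisely this error analysis: pointwise Huxley is unavailable in the full range of $p$, and the mean-square replacement must be combined with a trivial bound $\pi(rp) - \pi(p) \le (r-1)p + 1$ for small $p$ (whose total contribution is $\ll (r-1)x/(\log x)^A$ for a suitable $A$). The $S_2$-piece is handled analogously, using the short-interval machinery for $p$ near $y$ (where $(p,x/p]$ has length roughly $(r-1)p$) and trivial bounds for $p$ close to $\sqrt{x}$ (where the interval degenerates). Tracking the precise $\log\log x$ factor while joining these regimes is the delicate part of the argument.
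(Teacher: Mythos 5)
Your overall strategy coincides with the paper's: split at $p\le\sqrt{x/r}$, extract the main term from the prime number theorem, and control the error by an almost-all-intervals (mean-square) PNT coming from Huxley's zero-density estimate, applied through a dyadic decomposition, Cauchy--Schwarz, and trivial bounds for small $p$. The paper's realization of this is Zaccagnini's theorem (\cref{thm:Zaccagnini}) together with a Saffari--Vaughan-type supremum lemma (\cref{lem:Z_sup}), the latter being needed because the interval length $h=(r-1)t$ varies within each dyadic block --- a point your sketch glosses over but which is routine.

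The genuine gap is in your account of where the factor $\log\log x$ and the exponent $5/2$ come from: they do \emph{not} come from Mertens' $\sum_{p\le y}1/p\sim\log\log y$. If you organize Cauchy--Schwarz so as to produce that Mertens sum (i.e.\ with weights $1/p$), you are left with $\sum_p p\,E_p^2$, $E_p\coloneqq\pi(rp)-\pi(p)-(r-1)p/\log p$, and the dyadic mean-square bound then yields only $\sum_p E_p\ll (r-1)x(\log\log x)^{3/2}/(\log x)^{2}$, short of the claimed error by a factor $(\log x)^{1/2}$. The correct organization is the \emph{unweighted} Cauchy--Schwarz over the $\asymp\sqrt{x}/\log x$ primes up to $\sqrt{x}$, giving $\sum_pE_p\ll(\sqrt{x}/\log x)^{1/2}(\sum_{n\le\sqrt x}E_n^2)^{1/2}$; the $\log\log x$ then enters through the quality factor $(\epsilon(x)+\log\log x/\log x)^2$ of the mean-square theorem itself (one is forced to take $\epsilon(x)\asymp\log\log x/\log x$ so that the admissible length $t^{1/6-\epsilon(t)}$ reaches down to $(r-1)t$ at $t\asymp\sqrt{x}(\log x)^{-2}$), producing $\sum_nE_n^2\ll(r-1)^2x^{3/2}(\log\log x)^2/(\log x)^4$ and hence the stated $(r-1)x\log\log x/(\log x)^{5/2}$. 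Relatedly, the threshold $5/12$ arises as $\tfrac12\cdot\tfrac56$ from requiring $(r-1)t\ge t^{1/6}$ at $t\asymp\sqrt{x}$, not from Huxley's pointwise exponent $7/12$. Two smaller repairs: your truncated main-term formulas leave an unabsorbed $O((r-1)^3x/(\log x)^2)$, which is too large for $r$ near $3/2$; either carry the computation exactly (the $r$-dependent constants $2(1-1/r)$ and $2(\log r-1+1/r)$ cancel to exactly $2\log r$) or, as the paper does, dispatch the range $r\ge1+\exp(-c\sqrt{\log x})$ by \cref{thm:large} and reserve the short-interval machinery for $r$ very close to $1$, where all $(r-1)^2$ terms are negligible.
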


If we let $r=x/4$ and apply \cref{lem:to_Landau} below,
then \cref{thm:large_int} and \cref{thm:large}
are reduced to Landau's asymptotic formula \cref{Landau_asymp}.
Thus, those results give the desired interpolation.
Also, by estimating the error term of \cref{thm:large} as
\[
\frac{x\log r}{(\log x)^2(\log \frac{x}{r})}
\ll
\frac{x}{(\log x)(\log \frac{x}{r})},
\]
we obtain Justus' formula \cref{Justus}.
We remark that
\[
1+\exp(-c\sqrt{\log x})\quad\text{and}\quad
O\left(\frac{x}{\log x}e^{-c\sqrt{\log\frac{x}{r}}}\right)
\]
in \cref{thm:large_int} and \cref{thm:large}
can be improved by using the prime number theorem
with the Vinogradov--Korobov type error term \cite[(12.27)]{Ivic}.
Also, the exponent $5/2$ in the error term of \cref{thm:small}
can be improved to $3$ in the narrower range $r\geq 1+x^{-5/12+\epsilon}$
by using \cite[Lemma~5]{Saffari_Vaughan}
instead of the result of Zaccagnini \cite{Zaccagnini} given below
as \cref{thm:Zaccagnini}.

We can combine \cref{thm:large,thm:small}
to obtain the following uniform result. 
\begin{theorem}
\label{thm:uniform}
We have
\[
\pi_{2}(x;r)=\frac{x}{\log x}\left(\log\log xr-\log\log \frac{x}{r}\right)\left(1+O\left(\frac{1}{\log\log x}\right)\right)
\]
for $1+x^{-5/12}< r\le x/4$, where the implicit constant is absolute.
\end{theorem}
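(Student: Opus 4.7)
The plan is to split the range $1+x^{-5/12}<r\le x/4$ into two overlapping subranges at $r=3/2$ and apply \cref{thm:large} on the upper part and \cref{thm:small} on the lower part (taking $x$ large enough that $1+\exp(-c\sqrt{\log x})<3/2$, so the two subranges together cover everything; bounded $x$ is absorbed into the implicit constant). In each subrange the task is twofold: first, verify that the relative error given by the applied theorem is $O(1/\log\log x)$; and second, where needed, rewrite the main term of that theorem in the uniform form $\frac{x}{\log x}\bigl(\log\log xr-\log\log\frac{x}{r}\bigr)$ up to an admissible relative error.

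For the upper subrange $3/2<r\le x/4$, \cref{thm:large} already yields the desired main term, with error $O\!\left(\frac{x\log r}{(\log x)^{2}\log\frac{x}{r}}\right)$. Writing $\alpha=\log xr$ and $\beta=\log\frac{x}{r}$, so that $\alpha+\beta=2\log x$, $\alpha-\beta=2\log r$, and $\log\log xr-\log\log\frac{x}{r}=\log(\alpha/\beta)$, the ratio of this error to the main term simplifies to
\[
\frac{\log r}{(\log x)(\log\tfrac{x}{r})\,\log(\alpha/\beta)}=\frac{t-1}{2(\log x)\,\log t},\qquad t\coloneqq\alpha/\beta\ge 1,
\]
after using $(t+1)\beta=2\log x$. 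Since $f(t)=(t-1)/\log t$ is increasing for $t>1$ and attains its maximum on our range at $r=x/4$, where $t\asymp(\log x)/\log 4$, one obtains the worst-case bound $f(t)/(2\log x)\ll 1/\log\log x$. Thus the required relative error holds throughout this subrange.

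For the lower subrange $1+x^{-5/12}<r\le 3/2$, \cref{thm:small} gives
\[
\pi_2(x;r)=\frac{2x\log r}{(\log x)^2}+O\!\left(\frac{x\log r\,\log\log x}{(\log x)^{5/2}}\right),
\]
whose error is $O((\log\log x)/\sqrt{\log x})$ relative to the main term, far better than $O(1/\log\log x)$. It remains to compare the two main terms: writing $u=(\log r)/\log x$, the Taylor expansion
\[
\log\log xr-\log\log\tfrac{x}{r}=\log\frac{1+u}{1-u}=2u+O(u^{3})
\]
is valid since $u\le(\log\tfrac{3}{2})/\log x$ is small, and multiplying by $x/\log x$ gives
\[
\frac{x}{\log x}\left(\log\log xr-\log\log\tfrac{x}{r}\right)=\frac{2x\log r}{(\log x)^2}+O\!\left(\frac{x(\log r)^{3}}{(\log x)^{4}}\right).
\]
Since $\log r\le\log\tfrac{3}{2}$ is bounded, the discrepancy is $O(1/(\log x)^{2})$ relative to $\frac{2x\log r}{(\log x)^2}$, which is negligible compared to $1/\log\log x$. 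Combining the two subranges yields the theorem.

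The main obstacle is the upper subrange near the endpoint $r=x/4$, where the main term of \cref{thm:large} swells to the Landau size $x\log\log x/\log x$ and the permitted relative error tightens to exactly the $1/\log\log x$ level; the monotonicity of $f(t)=(t-1)/\log t$ in the identity above is what pins down that this worst case meets the target rather than violating it, so everything else is routine Taylor expansion and book-keeping.
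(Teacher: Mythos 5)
Your proposal is correct and follows essentially the same route as the paper: split the range (the paper at $r_0=1+\exp(-c\sqrt{\log x})$, you at $3/2$ — immaterial), apply \cref{thm:large} above and \cref{thm:small} below, and Taylor-expand $\log\log xr-\log\log\frac{x}{r}$ to reconcile the main terms. The only cosmetic difference is that you verify the error-to-main-term ratio for \cref{thm:large} directly via the substitution $t=\log(xr)/\log(x/r)$ and the monotonicity of $(t-1)/\log t$, whereas the paper packages the same bound as \cref{lem:loglog_main_error_ratio}.
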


The second main aim of this paper is
to detect some bias of the distribution of the RSA integers for large $r$
and to determine when the bias show up.
Compared to our asymptotic formulas above,
our result on the bias is still rather incomplete.
In particular, we will not study the behavior
of the bias coefficient $\mathcal{L}_{\chi}(s)$ given below.
The bias can appear only for the case $r$ is close to $x$.
Thus, we introduce a change of variable $s\coloneqq x/r$.
Also, since the behavior of the resulting bias
is sensitive to the error term of the prime number theorem,
we assume the prime number theorem of the following form:
for a given positive integer $Q$,
there exists a positive number $K$
and a non-negative locally integrable function $\delta(x)$
defined on $[2,+\infty)$ such that
for any non-principal Dirichlet character $\chi\ \mod{Q}$,
we have
\begin{equation}
\tag{$\mathbf{P}$}
\label{PNT_assump}
\begin{aligned}
\pi(x)
&\coloneqq
\sum_{p\le x}1
=
\Li(x)+O\left(K\frac{x}{\log x}\delta(x)\right),\\
\pi(x,\chi)
&\coloneqq
\sum_{p\le x}\chi(p)
\le
K\frac{x}{\log x}\delta(x)
\end{aligned}
\end{equation}
for $x\ge2$ with
\[
\Li(x)\coloneqq\int_{2}^{x}\frac{du}{\log u}
\]
and that $\delta(x)$ satisfies the conditions
\begin{equation}
\tag{$\bm{\Delta1}$}
\label{delta1}
\frac{1}{K}\le
\frac{x}{\log x}\delta(x)
\le K\frac{y}{\log y}\delta(y)\quad\text{for}\quad
2\le x\le y,
\end{equation}
\begin{equation}
\tag{$\bm{\Delta2}$}
\label{delta2}
\delta(y)(\log y)\le K\delta(x)(\log x)\quad\text{for}\quad
2\le x\le y,
\end{equation}
\begin{equation}
\tag{$\bm{\Delta3}$}
\label{delta3}
\int_{2}^{\infty}\frac{\delta(u)}{u}du
<+\infty.
\end{equation}
Note that for $2\le x\le y$, \cref{delta2} implies
\[
\delta(y)=\frac{\delta(y)\log y}{\log y}
\le
\frac{K\delta(x)\log x}{\log y}
\le
K\delta(x)
\]
so we have
\begin{equation}
\tag{$\bm{\Delta2}'$}
\label{delta21}
\delta(y)\le K\delta(x)\quad\text{for}\quad
2\le x\le y.
\end{equation}
For example, the well-known
de la Vall\'ee Poussin type
and Korobov--Vinogradov type prime number theorems
give
\[
\delta(x)=\exp(-c\sqrt{\log x})\quad\text{and}\quad
\delta(x)=\exp\left(-c\frac{(\log x)^{3/5}}{(\log\log (x+4))^{1/5}}\right),
\]
respectively. Note that the constant $K$ above may depend on the modulus $Q$. Also,
by assuming the generalized Riemann hypothesis (GRH),
we may have
\begin{equation}
\label{delta_GRH}
\delta(x)=x^{-1/2}(\log x)^2.
\end{equation}
It is easy to see that
these three error term estimates satisfy \cref{delta1}, \cref{delta2} and \cref{delta3}. 

Our result on the bias of the RSA integers is the following.

\begin{theorem}
\label{thm:bias}
Let $\chi$ be a quadratic character $\mod{Q}$ and $\eta=\pm1$.
Assume that \cref{PNT_assump} holds with the conditions
\cref{delta1}, \cref{delta2} and \cref{delta3}
on $\delta(x)$.
Then, for $2\le r\le x/4$,
\begin{equation}
\label{bias:asymp}
\frac
{\#\{{pq\le x}:{p<q\le rp,\ \chi(p)=\chi(q)=\eta}\}}
{\#\{{pq\le x}:{p<q\le rp,\ (pq,Q)=1}\}}
=
\frac{1}{4}\left(1+H_{\chi,\eta}(x;r)+O(\delta(\sqrt{x}))\right),
\end{equation}
where by writing $s\coloneqq x/r$,
$H_{\chi,\eta}(x;r)$ is given by
\begin{align}
H_{\chi,\eta}(x;r)
&=
\frac{\eta}{\log\log xr-\log\log\frac{x}{r}}
\left(\mathcal{L}_{\chi}(s)\left(1+O\left(\frac{1}{\log\log x}\right)\right)
+
O\left(\frac{\Delta(\sqrt{s})}{\log x}\right)\right),\\
\mathcal{L}_{\chi}(s)
&\coloneqq
\frac{1}{s}\sum_{p<\sqrt{s}}\chi(p)p
+
\sum_{p\ge\sqrt{s}}\frac{\chi(p)}{p},\quad
\Delta(x)\coloneqq
\delta(x)+\int_{x}^{\infty}\frac{\delta(u)}{u}du
\end{align}
and the implicit constant depends only on $Q$ and $K$.
\end{theorem}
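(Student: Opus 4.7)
The plan is to adapt the Dummit--Granville--Kisilevsky argument underlying \cref{thm:DGK} so as to accommodate the side condition $p<q\le rp$. The starting point is the elementary identity
\[
\mathbf{1}_{\chi(p)=\chi(q)=\eta}=\tfrac{1}{4}(1+\eta\chi(p))(1+\eta\chi(q)),\qquad(pq,Q)=1,
\]
which decomposes the numerator of \cref{bias:asymp} into
\[
\tfrac{1}{4}\bigl(N(x;r)+\eta\,T_{1}(x;r)+T_{2}(x;r)\bigr),
\]
where $N(x;r)$ is the denominator, $T_{1}(x;r)=\sum_{\{p,q\}}(\chi(p)+\chi(q))$ summed over admissible unordered pairs, and $T_{2}(x;r)=\sum_{\{p,q\}}\chi(pq)$. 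Removing the $O(1)$ pairs dividing $Q$ from $\pi_{2}(x;r)$ and invoking \cref{thm:large,thm:large_int} gives $N(x;r)=(xL(x;r)/\log x)(1+O(1/\log\log x))$ with $L(x;r)\coloneqq\log\log xr-\log\log(x/r)$, so it remains to evaluate $T_{1}$ and bound $T_{2}$.

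For $T_{1}$, I would rewrite it as $\sum_{p}\chi(p)N_{p}$ with $N_{p}$ counting primes $q\ne p$ in $[p/r,\min(rp,x/p)]$. The threshold $rp=x/p$ at $p=\sqrt{s}$ splits the $p$-sum into two pieces; in each I invoke the PNT \cref{PNT_assump} to replace $\pi$-differences by $\Li$-differences, the error being absorbed into a term of size $O(x\Delta(\sqrt{s})/(\log x)^{2})$ by \cref{delta1,delta2,delta21,delta3}. For the small-$p$ piece $p\le\sqrt{s}$, I expand $\Li(rp)=rp/\log(rp)+O(rp/(\log rp)^{2})$ and the corresponding identity for $\Li(p/r)$, then apply partial summation against $\pi(t,\chi)$; I expect to extract a leading term matching $\frac{r}{\log x}\sum_{p<\sqrt{s}}\chi(p)p=\frac{x}{s\log x}\sum_{p<\sqrt{s}}\chi(p)p$, i.e., the first half of $\mathcal{L}_{\chi}(s)$ multiplied by $x/\log x$. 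For the large-$p$ piece $\sqrt{s}<p\le\sqrt{xr}$, the dominant contribution is $\Li(x/p)\approx(x/p)/\log(x/p)$ with $\log(x/p)=(\log x)(1+O(1/\log\log x))$ in this range, yielding $(x/\log x)\sum_{\sqrt{s}<p\le\sqrt{xr}}\chi(p)/p$; completing the sum to $p\ge\sqrt{s}$ (with tail $O(\delta(\sqrt{x}))$ via the second line of \cref{PNT_assump}) supplies the second half of $\mathcal{L}_{\chi}(s)$ multiplied by $x/\log x$. Putting both contributions together,
\[
T_{1}(x;r)=\tfrac{x\mathcal{L}_{\chi}(s)}{\log x}\bigl(1+O(1/\log\log x)\bigr)+O\bigl(\tfrac{x\Delta(\sqrt{s})}{(\log x)^{2}}\bigr)+O\bigl(\tfrac{xL(x;r)\delta(\sqrt{x})}{\log x}\bigr).
\]

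For the bilinear sum $T_{2}$, I would fix $p$ and invoke the character-sum bound of \cref{PNT_assump} for the inner sum $\sum_{q\in I_{p}}\chi(q)$, yielding $O(K\,y_{\max}(p)\delta(y_{\max}(p))/\log y_{\max}(p))$. Summing trivially over $p$ and exploiting \cref{delta21} gives $T_{2}(x;r)=O(N(x;r)\delta(\sqrt{x}))$, which contributes to the overall $O(\delta(\sqrt{x}))$ error in \cref{bias:asymp} upon division by $N(x;r)$. The claimed asymptotic for $H_{\chi,\eta}(x;r)$ then follows from $H_{\chi,\eta}=(\eta T_{1}+T_{2})/N$.

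I expect the main obstacle to be the extraction of the first half of $\mathcal{L}_{\chi}(s)$ from the small-$p$ contribution to $T_{1}$: unlike the DGK situation, the logarithm $\log(rp)$ appearing in the denominator of the main term is not uniformly comparable to $\log x$ when $r$ is not close to $x$. Handling this requires an integration-by-parts argument on $\sum_{p\le\sqrt{s}}\chi(p)\Li(rp)$ in which the boundary contribution at $p=\sqrt{s}$ provides the leading order of size $\mathcal{L}_{\chi}(s)\cdot x/\log x$, while the integral remainder is controlled by the hypotheses \cref{delta1,delta2,delta3} and absorbed into the $O(1/\log\log x)$ and $O(\Delta(\sqrt{s})/\log x)$ error terms.
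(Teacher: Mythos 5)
Your overall strategy coincides with the paper's: expand the character conditions by orthogonality, apply the assumed prime number theorem \cref{PNT_assump} together with partial summation, split the resulting sums at $p=\sqrt{s}$, and evaluate the denominator via the earlier asymptotic formulas (the symmetric four-term expansion $N+\eta T_1+T_2$ versus the paper's sequential two-step expansion is only an organizational difference). However, the execution of the crucial step --- extracting $\mathcal{L}_{\chi}(s)\,x/\log x$ from $T_1$ --- has a genuine gap in the large-$p$ range. You assert that $\log(x/p)=(\log x)(1+O(1/\log\log x))$ for $\sqrt{s}<p\le\sqrt{xr}$; this is false: for $p$ near $\sqrt{xr}$ one has $x/p\approx\sqrt{s}$, so $\log(x/p)\approx\tfrac{1}{2}\log s$, which stays bounded when $r$ is close to $x/4$, and even for $p\le\sqrt{x}$ one only gets $\log(x/p)\asymp\log x$, not $1+O(1/\log\log x)$. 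Moreover, even where such an approximation held, you could not pull a factor $(1+O(1/\log\log x))$ out of the oscillating sum: the resulting error would be $O\bigl(\tfrac{1}{\log\log x}\sum_{\sqrt{s}<p\le\sqrt{x}}1/p\bigr)=O(1)$ in the worst case, which swamps the main term $\mathcal{L}_{\chi}(s)$ (typically $o(1)$). The correct move is the exact identity $\tfrac{1}{\log(x/p)}=\tfrac{1}{\log x}+\tfrac{\log p}{(\log x)\log(x/p)}$: the first piece produces $\tfrac{x}{\log x}\sum\chi(p)/p$ exactly, and the second is bounded by partial summation against $\pi(u,\chi)$ using \cref{PNT_assump} and \cref{delta21}, giving $O(x\Delta(\sqrt{s})/(\log x)^2)$. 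The analogous remark applies to the small-$p$ piece: the leading term $\tfrac{r}{\log x}\sum_{p<\sqrt{s}}\chi(p)p$ is not a ``boundary contribution at $p=\sqrt{s}$'' of an integration by parts --- it is the sum itself, isolated by writing $\tfrac{1}{\log(rp)}=\tfrac{1}{\log x}+\tfrac{\log(s/p)}{(\log x)\log(rp)}$; all boundary and integral terms arising from the partial summation of the correction are errors.

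Two smaller points. First, the pairs with $p\mid Q$ or $q\mid Q$ are not ``$O(1)$ pairs'': there are $O(1)$ such primes, but each contributes up to $\asymp r/\log 2r$ pairs, and one needs \cref{lem:r_logr}, \cref{lem:x_logx_exp} and \cref{lem:loglog_main_error_ratio} to show this is $\ll N(x;r)/\log\log x$. Second, a trivial summation over $p$ does not give $T_2=O(N(x;r)\delta(\sqrt{x}))$: the sum $\sum_{p\le\sqrt{x}}R(\min(rp,x/p))$ must itself be split at $p=\sqrt{s}$ and handled with \cref{delta1} and \cref{delta2}, and the resulting bound is $\ll x(\log x)^{-2}\delta(\sqrt{s})+\bigl(\log\log xr-\log\log\tfrac{x}{r}\bigr)x(\log x)^{-1}\delta(\sqrt{x})$; only the second term is $O(N\delta(\sqrt{x}))$, while the first is not (since $\delta(\sqrt{s})$ may greatly exceed $\delta(\sqrt{x})$) and must instead be routed into the $O(\Delta(\sqrt{s})/\log x)$ error inside $H_{\chi,\eta}$. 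These defects are repairable along the lines just indicated, but as written the large-$p$ step would not yield the stated asymptotic.
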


Note that when $r=x/4$,
\cref{thm:bias} is reduced to \cref{thm:DGK}
since in this case,
\[
\log\log xr-\log\log\frac{x}{r}
=
\log\log\frac{x^2}{4}+O(1)
=
\log\log x+O(1),
\]
\[
\frac{\Delta(\sqrt{s})}{\log x}
\ll
\frac{1}{\log x},\quad
\Delta(\sqrt{x})\log\log x
\ll
e^{-c\sqrt{\log x}},\quad
\mathcal{L}_{\chi}(s)=\mathcal{L}_{\chi}
\]
and the condition $q\le rp$ is vacuous in \cref{thm:bias},
where we used the de la Vall\'ee Pouusin type
prime number theorem and $c>0$ is some absolute constant.
In this sense, \cref{thm:bias}
gives a generalization of \cref{thm:DGK}.

An upper bound for ``the bias coefficient''
$\mathcal{L}_{\chi}(s)$ can be obtained through \cref{PNT_assump},
\cref{delta1} and \cref{delta2} as follows.
By partial summation, we may obtain
\[
\frac{1}{s}\sum_{p\le\sqrt{s}}\chi(p)p
=
\frac{1}{s}\int_{2-}^{\sqrt{s}}ud\pi(u,\chi)
=
\frac{\pi(\sqrt{s},\chi)}{\sqrt{s}}
-
\frac{1}{s}\int_{2}^{\sqrt{s}}\pi(u,\chi)du
\ll
\frac{\Delta(\sqrt{s})}{\log s}
\]
and
\[
\sum_{p>\sqrt{s}}\frac{\chi(p)}{p}
=
\int_{\sqrt{s}}^{\infty}\frac{d\pi(u,\chi)}{u}
=
-\frac{\pi(\sqrt{s},\chi)}{\sqrt{s}}
+
\int_{\sqrt{s}}^{\infty}\frac{\pi(u,\chi)}{u^2}du
\ll
\frac{\Delta(\sqrt{s})}{\log s}.
\]
Therefore,
\begin{equation}
\label{bias:coefficient_bound}
\mathcal{L}_{\chi}(s)
=
\frac{1}{s}\sum_{p<\sqrt{s}}\chi(p)p
+
\sum_{p\ge\sqrt{s}}\frac{\chi(p)}{p}
=
\frac{1}{s}\sum_{p\le\sqrt{s}}\chi(p)p
+
\sum_{p>\sqrt{s}}\frac{\chi(p)}{p}
\ll
\frac{\Delta(\sqrt{s})}{\log s}.
\end{equation}
Therefore, for a fixed $r$, i.e.\ in the range $r\ll1$,
\[
H_{\chi,\eta}(x;r)
\ll
\log x
\left(\frac{\Delta(\sqrt{s})}{\log s}
+
\frac{\Delta(\sqrt{s})}{\log x}\right)
\ll
\Delta(\sqrt{s})
\]
and $s\asymp x$. Thus, by using the de la Vall\'ee Poussin type
prime number theorem,
\[
H_{\chi,\eta}(x;r)
\ll
\exp(-c\sqrt{\log x})
\]
for some positive absolute constant $c>0$.
Thus, when $r\ll1$, \cref{thm:bias} is reduced to
a special case of
the conclusion of Moree and the first author
\cite[Corollary~1.4]{Moree_SaadEddin}.

By the above two observations,
we may say that \cref{thm:DGK} and Corollary~1.4 of \cite{Moree_SaadEddin} are interpolated through \cref{thm:bias}.
To conclude the introduction,
we give a discussion on when the bias of the RSA integers appears.
If the upper bound \cref{bias:coefficient_bound} is tight
with the GRH-error term estimate \cref{delta_GRH}, i.e.\ 
if we can prove a similar $\Omega$-results on $\mathcal{L}_{\chi}(s)$,
then in the asymptotic formula of $H_{\chi,\eta}(x;r)$
in \cref{thm:bias}, the error term
\[
O\left(\frac{\Delta(\sqrt{s})}{\log x}\right)
\]
does not supersede the main term $\mathcal{L}_{\chi}(s)$
provied only $\epsilon\log x\ge\log s$ for sufficiently small $\epsilon>0$. Thus, the only term which may affect the main term
is the error term $O(\delta(\sqrt{x}))$
in \cref{bias:asymp}
or, if we combine with the $\log\log xr-\log\log x/r$ factor,
is the error term $O(\delta(\sqrt{x})\log\log x)$.
However, it is easy to see that
$O(\delta(\sqrt{x})\log\log x)$ also has
a magnitude smaller than the main term
just provided $\epsilon\log x\ge\log s$.
Although we still should prove some $\Omega$-results of $\mathcal{L}_{\chi}(s)$
and also we assumed GRH above,
these observations indicate
that some bias of the RSA integers may occur
for $r$ of the size $r\ge x^{1-\epsilon}$
with some suitable small number $\epsilon>0$.
Note that for $4\le s\le s_0$ with a fixed $s_0$,
numerical calculations of $\mathcal{L}_{\chi}(s)$
can be used as a substitution for the $\Omega$-results of $\mathcal{L}_{\chi}(s)$.

\section{Preliminary lemmas}
\label{section:prelim}
Throughout the paper,
we use the following notation and convention.
The letters $p,q$ are reserved for expressing prime numbers.
The letter $c$ denotes positive constants
which may have different values at different occurrences.
If Theorem or Lemma is stated
with the phrase ``where the implicit constant depends only on $a,b,c,\ldots$'',
then every implicit constant in the corresponding proof
may also depend on $a,b,c,\ldots$ even without special mention.

In this section, we prove some preliminary lemmas.

We begin with checking that $\pi_{2}(x;r)$
is reduced to $\pi_2(x)$ for large $r$.

\begin{lemma}
\label{lem:to_Landau}
For $r\ge x/4$, we have $\pi_{2}(x;r)=\pi_2(x)$.
\end{lemma}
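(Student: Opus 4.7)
The plan is to prove $\pi_2(x;r) = \pi_2(x)$ by showing that the two defining sets coincide for $r \ge x/4$. The inclusion $\pi_2(x;r) \le \pi_2(x)$ is immediate from the definitions, since every pair counted by $\pi_2(x;r)$ (with the extra constraint $q \le rp$) is also counted by $\pi_2(x)$. So the whole task reduces to verifying the reverse inclusion, namely that whenever $pq \le x$ and $p < q$, the auxiliary inequality $q \le rp$ is automatically satisfied under the hypothesis $r \ge x/4$.

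For this, I would argue directly. Given any pair of primes with $p < q$ and $pq \le x$, the smaller prime satisfies $p \ge 2$, so $p^2 \ge 4$. Using $pq \le x$ we get $q \le x/p$, and using $r \ge x/4$ together with $p \ge 2$ we get
\[
rp \ge \frac{x}{4}\,p \ge \frac{x}{4}\cdot\frac{4}{p} = \frac{x}{p} \ge q,
\]
where the middle inequality is just $p \ge 4/p$, equivalent to $p^2 \ge 4$. This shows $q \le rp$, so every pair counted by $\pi_2(x)$ is also counted by $\pi_2(x;r)$, giving the reverse inclusion.

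There is no real obstacle here; the lemma is essentially a sharpness observation for the elementary inequalities. The only thing to be careful about is the use of $p \ge 2$ (which requires $p$ to be prime, not merely a positive integer) and the strict inequality $p < q$ (which guarantees $p \le \sqrt{x}$, though in fact we only need $p \ge 2$ for the argument above). Combining the two inclusions yields $\pi_2(x;r) = \pi_2(x)$ for all $r \ge x/4$.
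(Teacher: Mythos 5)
Your proof is correct and follows essentially the same route as the paper: both arguments show that under $r\ge x/4$ the constraint $q\le rp$ is automatic for any primes with $pq\le x$, using only $p\ge 2$ (the paper chains the inequalities as $rp\ge 2r\ge x/2\ge x/p\ge q$, which is just a rearrangement of yours). No issues.
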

\begin{proof}
This is obvious since if $r\ge x/4$, then we have
\[
rp\ge2r\ge\frac{x}{2}\ge\frac{x}{p}\ge q
\]
for any prime numbers $p,q$ with $pq\le x$.
Thus, the condition
\[
pq\le x\quad\text{and}\quad
p<q\le rp
\]
is reduced to
\[
pq\le x\quad\text{and}\quad
p<q,
\]
which is the condition for the counting function $\pi_2(x)$.
\end{proof}

The next lemma is trivial and stated only for reference.

\begin{lemma}
\label{lem:monotone}
Let $c>0$. For $y\ge x\ge1$, we have
\[
x\exp(-c\sqrt{\log x})
\ll
y\exp(-c\sqrt{\log y}),
\]
where the implicit constant depends only on $c$.
\end{lemma}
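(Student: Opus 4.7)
The plan is to reduce the inequality to an almost-monotonicity statement about the function $f(x) \coloneqq x\exp(-c\sqrt{\log x})$. Setting $u = \log x$ and $g(u) \coloneqq u - c\sqrt{u}$, we have $\log f(x) = g(\log x)$, so everything reduces to understanding $g$ on $[0,\infty)$. A one-line differentiation gives $g'(u) = 1 - c/(2\sqrt{u})$, which shows that $g$ is decreasing on $[0, c^2/4]$ and increasing on $[c^2/4, \infty)$, with minimum value $g(c^2/4) = -c^2/4$ and boundary value $g(0) = 0$.

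I would then split into two cases. First, if $\log x \ge c^2/4$, then also $\log y \ge c^2/4$, and monotonicity of $g$ on $[c^2/4,\infty)$ yields $g(\log x) \le g(\log y)$, hence $f(x) \le f(y)$, giving the inequality with implicit constant $1$. Second, if $\log x < c^2/4$, then from the shape of $g$ on $[0, c^2/4]$ we have $g(\log x) \le g(0) = 0$, so $f(x) \le 1$; meanwhile, for every $y \ge 1$, the global lower bound $g(\log y) \ge -c^2/4$ yields $f(y) \ge \exp(-c^2/4)$. Combining these two bounds gives $f(x) \le e^{c^2/4}\, f(y)$.

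Since the implicit constant $e^{c^2/4}$ depends only on $c$, this establishes the lemma. There is no real obstacle: the argument is the standard device of handling a short initial segment on which the would-be monotone factor $f$ is not actually monotone by absorbing its oscillation into a $c$-dependent multiplicative constant. The only point to watch is correctly identifying the turning point $u = c^2/4$ of $g$ and noting that its restricted minimum $-c^2/4$ is also its global minimum on $[0,\infty)$.
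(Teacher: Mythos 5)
Your proof is correct and follows essentially the same route as the paper: both identify the turning point $x=\exp(c^2/4)$ of $x\exp(-c\sqrt{\log x})$ (you via the logarithm $g(u)=u-c\sqrt{u}$, the paper by differentiating directly), use monotonicity beyond that point, and absorb the initial decreasing segment into the constant $e^{c^2/4}$ using the bounds $f(x)\le f(1)=1$ and $f(y)\ge e^{-c^2/4}$. No gaps.
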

\begin{proof}
Since
\[
\frac{d}{dx}\left(x\exp(-c\sqrt{\log x})\right)
=
\exp(-c\sqrt{\log x})
\left(1-\frac{c}{2\sqrt{\log x}}\right),
\]
the function $x\exp(-c\sqrt{\log x})$ is
decreasing in $1\le x\le\exp((c/2)^2)$ and increasing in $\exp((c/2)^2)\le x$.
Thus, if $y\ge x>\exp((c/2)^2)$, then there is nothing to prove.
If $1\le x\le\exp((c/2)^2)$, then we have
\[
x\exp(-c\sqrt{\log x})
\le
1
=\exp\left(\frac{c^2}{4}\right)\exp\left(-\frac{c^2}{4}\right)
\le
\exp\left(\frac{c^2}{4}\right)
y\exp(-c\sqrt{\log y})
\]
since the minimum of $y\exp(-c\sqrt{\log y})$ in $y\ge1$ is taken at $y=\exp((c/2)^2)$.
\end{proof}

The following lemmas are used several times to estimate the error terms.

\begin{lemma}
\label{lem:x_p}
Let $c>0$. For $x\ge 2$, we have
\[
\sum_{p\le\sqrt{x}}\frac{x}{p}e^{-c\sqrt{\log\frac{x}{p}}}
\ll
xe^{-\frac{c}{2}\sqrt{\log x}},
\]
where the implicit constant depends only on $c$.
\end{lemma}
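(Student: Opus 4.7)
The plan is to exploit the fact that the range $p\le\sqrt{x}$ forces $x/p\ge\sqrt{x}$, so the exponential factor $\exp(-c\sqrt{\log(x/p)})$ may be bounded uniformly in $p$ by something depending only on $\log x$. After pulling this uniform factor out of the sum, what remains is the classical sum $\sum_{p\le\sqrt{x}} 1/p$, to which Mertens' theorem applies.

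Concretely, for $p\le\sqrt{x}$ one has $\log(x/p)\ge\tfrac{1}{2}\log x$, and hence
\[
e^{-c\sqrt{\log(x/p)}}\le e^{-(c/\sqrt{2})\sqrt{\log x}}.
\]
Factoring this out and invoking Mertens' estimate $\sum_{p\le\sqrt{x}}1/p\ll\log\log x$ gives
\[
\sum_{p\le\sqrt{x}}\frac{x}{p}e^{-c\sqrt{\log(x/p)}}
\ll x\log\log x\cdot e^{-(c/\sqrt{2})\sqrt{\log x}},
\]
uniformly for $x\ge2$.

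To convert the exponent $c/\sqrt{2}$ into the desired $c/2$, I would set $\alpha\coloneqq c/\sqrt{2}-c/2 = c(\sqrt{2}-1)/2>0$ and use the trivial growth bound $\log\log x\ll_{\alpha} e^{\alpha\sqrt{\log x}}$ valid uniformly for $x\ge2$ (any bounded initial range being absorbed into the implicit constant). Multiplying through yields the claimed estimate $\ll xe^{-(c/2)\sqrt{\log x}}$, with an implicit constant depending only on $c$.

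There is essentially no obstacle in this argument; the whole point is that the strict inequality $1/\sqrt{2}>1/2$ supplies exactly enough slack in the exponent to swallow the $\log\log x$ factor coming from Mertens. The only slight subtlety is to note that the bound is needed uniformly starting from $x=2$, but the non-monotonicity of $x\exp(-c\sqrt{\log x})$ on $[1,\exp((c/2)^2)]$ (compare \cref{lem:monotone}) only affects a compact range and is harmless.
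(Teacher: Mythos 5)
Your proposal is correct and is essentially identical to the paper's proof: both bound $e^{-c\sqrt{\log(x/p)}}\le e^{-(c/\sqrt{2})\sqrt{\log x}}$ uniformly for $p\le\sqrt{x}$, apply Mertens' estimate to $\sum_{p\le\sqrt{x}}1/p$, and absorb the resulting $\log\log x$ factor into the exponent using the gap between $c/\sqrt{2}$ and $c/2$. The paper merely compresses these steps into a single displayed chain of inequalities.
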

\begin{proof}
We have
\[
\sum_{p\le\sqrt{x}}
\frac{x}{p}e^{-c\sqrt{\log\frac{x}{p}}}
\ll
xe^{-\frac{c}{\sqrt{2}}\sqrt{\log x}}\sum_{p\le\sqrt{x}}\frac{1}{p}
\ll
xe^{-\frac{c}{2}\sqrt{\log x}}.
\]
This completes the proof.
\end{proof}

\begin{lemma}
\label{lem:r_logr}
Let $c>0$. For $1\le r\le x/4$, we have
\[
\frac{r}{\log 2r}\ll\frac{x}{\log x}e^{-c\sqrt{\log\frac{x}{r}}},
\]
where the implicit constant depends only on $c$.
\end{lemma}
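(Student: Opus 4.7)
The plan is to reduce the two-variable inequality to a one-variable inequality via the substitution $t \coloneqq x/r$, which satisfies $t \geq 4$ by the hypothesis $r \leq x/4$. The point is that the exponential decay $e^{-c\sqrt{\log(x/r)}}$ depends only on $t$, while both $\frac{r}{\log 2r}$ and $\frac{x}{\log x}$ can be rewritten using $x = rt$.

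First I would rearrange the claimed bound. Multiplying by $\log 2r$ and dividing by $r$, and using $x/r = t$, the desired inequality
\[
\frac{r}{\log 2r} \ll \frac{x}{\log x}\,e^{-c\sqrt{\log(x/r)}}
\]
is equivalent to
\[
\frac{\log x}{\log 2r} \ll t\, e^{-c\sqrt{\log t}}.
\]
Next I would bound the left-hand side. Writing $\log x = \log r + \log t$ and noting that $\log 2r \geq \log 2 > 0$ and $\log 2r \geq \log r \geq 0$ for $r \geq 1$, I get
\[
\frac{\log x}{\log 2r} = \frac{\log r}{\log 2r} + \frac{\log t}{\log 2r} \leq 1 + \frac{\log t}{\log 2}.
\]
For $t \geq 4$ this is $O(\log t)$, so it suffices to prove the pure one-variable bound $\log t \ll t\, e^{-c\sqrt{\log t}}$ on $[4,\infty)$.

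Finally, to verify this last inequality, I would substitute $u \coloneqq \log t \geq \log 4$, transforming it into $u \cdot e^{c\sqrt{u}} \ll e^{u}$. Since $u - c\sqrt{u} \to \infty$ faster than $\log u$, one has $e^{u - c\sqrt{u}}/u \to \infty$ as $u \to \infty$, while on the bounded range $u \in [\log 4, U]$ for any fixed $U$ both sides are trivially bounded. Hence the bound holds with an implicit constant depending only on $c$, completing the argument.

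The proof is essentially a routine calculation; there is no real obstacle. The only subtlety to keep an eye on is the uniformity of the implicit constant as $r$ approaches $1$, where $\log 2r$ becomes close to $\log 2$ rather than $\log r$, but this is precisely handled by the ``$+1$'' term in the bound on $\log x / \log 2r$ above.
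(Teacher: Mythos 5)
Your proof is correct, and the uniformity in $c$ is handled properly: the final one-variable inequality $u e^{c\sqrt{u}}\ll e^{u}$ holds with a constant depending only on $c$ because $u-c\sqrt{u}-\log u\ge u-(c+2)\sqrt{u}\ge -(c+2)^2/4$ for $u\ge 1$. Your route differs in organization from the paper's: you reduce everything to a single one-variable inequality in $t=x/r$ by first showing $\log x/\log 2r\ll\log t$ uniformly, whereas the paper splits into the two regimes $1\le r\le\sqrt{x}$ and $\sqrt{x}<r\le x/4$. In the first regime the paper bounds $r/\log 2r\ll\sqrt{x}\ll\frac{x}{\log x}e^{-c\sqrt{\log x}}$ and then uses $x/r\le x$; in the second it uses $\log 2r\gg\log x$ and $(x/r)^{-1}\ll e^{-c\sqrt{\log(x/r)}}$. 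Both arguments ultimately rest on the same fact, namely that $e^{c\sqrt{v}}=o(e^{\epsilon v})$, so neither buys anything quantitatively; your version avoids the case split at the cost of the algebraic rearrangement, while the paper's split makes each step a one-line comparison. Either is acceptable.
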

\begin{proof}
For the case $1\le r\le\sqrt{x}$, the lemma follows by
\begin{equation}
\frac{r}{\log2r}
\ll
\sqrt{x}
\ll
\frac{x}{\log x}e^{-c\sqrt{\log x}}
\ll
\frac{x}{\log x}e^{-c\sqrt{\log \frac{x}{r}}}.
\end{equation}
For the case $\sqrt{x}<r\le x/4$, we have
\begin{equation}
\frac{r}{\log 2r}
\ll
\frac{r}{\log x}
=
\frac{x}{\log x}\left(\frac{x}{r}\right)^{-1}
\ll
\frac{x}{\log x}e^{-c\sqrt{\log \frac{x}{r}}}.
\end{equation}
This completes the proof.
\end{proof}

\begin{lemma}
\label{lem:x_logx_exp}
Let $c>0$. For $1+\exp(-\frac{c}{2}\sqrt{\log x})\le r\le x/4$, we have
\[
\frac{x}{\log x}e^{-c\sqrt{\log\frac{x}{r}}}
\ll
\frac{x\log r}{(\log x)^2(\log\frac{x}{r})},
\]
where the implicit constant depends only on $c$.
\end{lemma}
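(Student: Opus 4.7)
The plan is to reduce the asserted bound to a purely logarithmic inequality and then split the range of $r$ at $r=\sqrt{x}$. Multiplying the desired estimate through by $(\log x)^{2}\log(x/r)/x$ reduces it to showing
\[
(\log x)\bigl(\log\tfrac{x}{r}\bigr)e^{-c\sqrt{\log(x/r)}} \ll \log r
\]
for $r$ in the stated range. A key elementary observation underlying both cases is that $t\mapsto t\,e^{-c\sqrt{t}}$ is bounded on $[0,\infty)$ by a constant depending only on $c$, its maximum being attained at $t=4/c^{2}$.

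For the upper subrange $\sqrt{x}\le r\le x/4$, the inequality is immediate: the factor $\log(x/r)\,e^{-c\sqrt{\log(x/r)}}$ is bounded by the observation above, while $\log r\ge\tfrac{1}{2}\log x$, so the left-hand side is $\ll \log x\ll \log r$.

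The main work lies in the lower subrange $1+\exp(-\tfrac{c}{2}\sqrt{\log x})\le r\le\sqrt{x}$. Here I would use $\log(x/r)\ge\tfrac{1}{2}\log x$ together with the trivial $\log(x/r)\le\log x$ to bound the left-hand side above by $(\log x)^{2}\exp(-(c/\sqrt{2})\sqrt{\log x})$. On the other side, from the assumption $r\ge 1+\exp(-\tfrac{c}{2}\sqrt{\log x})$ and the elementary inequality $\log(1+t)\ge t\log 2$ on $t\in[0,1]$, I would obtain $\log r\gg\exp(-\tfrac{c}{2}\sqrt{\log x})$ once $x$ is large. It therefore suffices to verify
\[
(\log x)^{2}\ll \exp\!\left(\tfrac{c(\sqrt{2}-1)}{2}\sqrt{\log x}\right),
\]
which holds for $x$ beyond some threshold depending on $c$; for $x$ below that threshold, every quantity in the original bound is bounded in terms of $c$ and the inequality is trivial with a suitably large implicit constant. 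The main obstacle is precisely this final comparison: the strictly positive gain $1/\sqrt{2}-1/2$ coming from restricting to $r\le\sqrt{x}$ is exactly what allows the exponential decay to defeat the factor $(\log x)^{2}$, and any cruder estimate—such as using only $\sqrt{\log(x/r)}\ge 0$ or ignoring the factor $1/\sqrt{2}$—would not close the argument against the lower bound on $\log r$.
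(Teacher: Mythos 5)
Your proof is correct and follows essentially the same route as the paper: split at $r=\sqrt{x}$, use the boundedness of $t\mapsto te^{-c\sqrt{t}}$ together with $\log r\gg\log x$ in the upper range, and in the lower range exploit the gap between $e^{-(c/\sqrt{2})\sqrt{\log x}}$ and the lower bound $\log r\gg e^{-(c/2)\sqrt{\log x}}$ to absorb the powers of $\log x$. The paper's version is just a more compressed chain of the same inequalities.
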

\begin{proof}
For $1+\exp(-\frac{c}{2}\sqrt{\log x})\le r\le\sqrt{x}$,
this lemma follows by
\begin{equation}
\frac{x}{\log x}e^{-c\sqrt{\log\frac{x}{r}}}
\ll
\frac{x}{\log x}e^{-\frac{c}{\sqrt{2}}\sqrt{\log x}}
\ll
\frac{x}{(\log x)^3}e^{-\frac{c}{2}\sqrt{\log x}}
\ll
\frac{x\log r}{(\log x)^2(\log\frac{x}{r})}.
\end{equation}
For $\sqrt{x}<r\le x/4$, we can prove the bound by
\begin{equation}
\frac{x}{\log x}e^{-c\sqrt{\log\frac{x}{r}}}
\ll
\frac{x}{(\log x)(\log\frac{x}{r})}
\ll
\frac{x\log r}{(\log x)^2(\log\frac{x}{r})}.
\end{equation}
This completes the proof.
\end{proof}

\begin{lemma}
\label{lem:loglog_diff}
For $1\le r\le x/4$, we have
\[
\log\log xr-\log\log\frac{x}{r}\ge\frac{2\log r}{\log x}
\]
and for $1\le r\le \sqrt{x}$, we have
\[
\log\log xr-\log\log\frac{x}{r}
=
\frac{2\log r}{\log x}+O\left(\left(\frac{\log r}{\log x}\right)^3\right),
\]
where the implicit constant is absolute.
\end{lemma}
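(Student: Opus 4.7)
The natural move is to set $t\coloneqq(\log r)/(\log x)$, which normalizes both sides. The hypothesis $1\le r\le x/4$ gives $0\le t\le\log(x/4)/\log x<1$, placing $t$ strictly inside the radius of convergence of the Taylor series
\[
\log\frac{1+t}{1-t}=2\sum_{\ell=0}^{\infty}\frac{t^{2\ell+1}}{2\ell+1},
\]
which is the identity already invoked in the introduction when estimating $F_r(x)$ for $r$ close to $1$. Since $\log(xr)=(\log x)(1+t)$ and $\log(x/r)=(\log x)(1-t)$, I obtain
\[
\log\log(xr)-\log\log(x/r)=\log\frac{1+t}{1-t}=2\sum_{\ell=0}^{\infty}\frac{t^{2\ell+1}}{2\ell+1}.
\]

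For the first inequality, I would simply note that every term $t^{2\ell+1}/(2\ell+1)$ with $\ell\ge0$ is non-negative (as $t\ge0$), so the whole series dominates its first term $2t$, yielding $\log\log(xr)-\log\log(x/r)\ge 2t=(2\log r)/(\log x)$.

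For the asymptotic when $1\le r\le\sqrt{x}$, the stronger hypothesis gives $t\le 1/2$, so I would control the tail by geometric summation:
\[
\sum_{\ell=1}^{\infty}\frac{2t^{2\ell+1}}{2\ell+1}\le 2t^3\sum_{\ell=1}^{\infty}t^{2(\ell-1)}=\frac{2t^3}{1-t^2}\le\frac{8t^3}{3},
\]
which is $O(t^3)=O((\log r/\log x)^3)$. Combining with the leading term $2t$ of the series gives the claimed expansion. No serious obstacle is anticipated; the only subtlety is convergence of the series, which requires $t<1$, and this is ensured by $r\le x/4<x$.
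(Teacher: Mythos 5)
Your proposal is correct and follows essentially the same route as the paper, which also reduces the difference to $\log(1+t)-\log(1-t)$ with $t=(\log r)/(\log x)$ and invokes the Taylor series $2\sum_{\ell\ge0}t^{2\ell+1}/(2\ell+1)$; you merely spell out the positivity of the terms for the lower bound and the geometric tail estimate for the $O(t^3)$ term, both of which the paper leaves implicit.
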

\begin{proof}
This follows immediately by the Taylor expansion
\begin{align}
\log\log xr-\log\log\frac{x}{r}
&=
\log\left(1+\frac{\log r}{\log x}\right)
-
\log\left(1-\frac{\log r}{\log x}\right)\\
&=
2\sum_{\ell=0}^{\infty}
\frac{1}{2\ell+1}\left(\frac{\log r}{\log x}\right)^{2\ell+1}.
\end{align}
This completes the proof.
\end{proof}

\begin{lemma}
\label{lem:loglog_main_error_ratio}
For $1\le r\le x/4$, we have
\[
\frac{x\log r}{(\log x)^2(\log\frac{x}{r})}
\ll
\frac{x}{\log x}\left(\log\log xr-\log\log\frac{x}{r}\right)\frac{1}{\log\log x},
\]
where the implicit constant is absolute.
\end{lemma}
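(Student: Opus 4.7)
My plan is to divide both sides by $\frac{x}{(\log x)(\log\log x)}$ and reduce the desired inequality to
\[
\frac{(\log r)(\log\log x)}{(\log x)\log\frac{x}{r}}
\ll
\log\log xr-\log\log\frac{x}{r},
\]
which I would then verify by splitting into the two cases $1\le r\le\sqrt{x}$ and $\sqrt{x}<r\le x/4$.

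In the first case, I would apply the lower bound $\log\log xr-\log\log\frac{x}{r}\ge 2\log r/\log x$ from \cref{lem:loglog_diff}. The inequality then reduces to $\log\log x\ll\log\frac{x}{r}$, which is immediate from $\log\frac{x}{r}\ge(\log x)/2$ in this subrange.

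In the second case I would use the identity $\log\log xr-\log\log\frac{x}{r}=\log(\log(xr)/\log(x/r))$ together with $\log(xr)\ge(3/2)\log x$ to obtain
\[
\log\log xr-\log\log\frac{x}{r}\ge\log\!\left(\frac{3\log x}{2\log\frac{x}{r}}\right).
\]
Combined with the trivial bound $\log r\le\log x$ on the left-hand side, it then suffices to show $g(u)\gg\log\log x$ uniformly for $u\coloneqq\log\frac{x}{r}\in[\log 4,(\log x)/2]$, where $g(u)\coloneqq u\log(3\log x/(2u))$. Since $g''(u)=-1/u<0$, $g$ is concave on this interval and therefore attains its minimum at an endpoint; a direct evaluation yields $g(\log 4)\asymp\log\log x$ and $g((\log x)/2)\asymp\log x$, both $\gg\log\log x$, which closes the estimate.

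The only delicate point is the regime $r\to x/4$: the inequality is essentially sharp there, so the crude bound from \cref{lem:loglog_diff} does not suffice in the second case, and one must instead exploit the logarithmic growth of $\log(3\log x/(2\log(x/r)))$ near the endpoint $u=\log 4$, which is precisely what the concavity argument captures.
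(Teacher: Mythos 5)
Your proof is correct. The overall skeleton matches the paper's: both reduce to the ratio inequality, split at $r=\sqrt{x}$, and handle the range $1\le r\le\sqrt{x}$ identically via the lower bound $\log\log xr-\log\log\frac{x}{r}\ge 2\log r/\log x$ together with $\log\frac{x}{r}\gg\log x$. Where you genuinely diverge is the range $\sqrt{x}<r\le x/4$. The paper first discards $\log r/\log x\le 1$ and $\log\frac{x}{r}\ge\log\log\frac{x}{r}$, and then applies the algebraic identity $\frac{1}{AB}=\frac{1}{A+B}\left(\frac{1}{A}+\frac{1}{B}\right)$ with $A=\log\log\frac{x}{r}$ and $B=\log\log xr-\log\log\frac{x}{r}$, bounding $1/A$ by $1/\log\log 4$ and $1/B$ by $O(1)$ via \cref{lem:loglog_diff}, so that the surviving factor $1/\log\log xr$ delivers the result. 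You instead keep the exact identity $\log\log xr-\log\log\frac{x}{r}=\log\bigl(\log(xr)/\log(x/r)\bigr)\ge\log\bigl(3\log x/(2u)\bigr)$ with $u=\log\frac{x}{r}$, and verify $u\log\bigl(3\log x/(2u)\bigr)\gg\log\log x$ on $[\log 4,(\log x)/2]$ by concavity and endpoint evaluation. Both arguments are of comparable length; the paper's is purely algebraic and avoids any calculus, while yours makes the source of the $\log\log x$ more transparent — it visibly comes from the endpoint $u=\log 4$, i.e.\ $r$ near $x/4$, which is exactly the regime where the inequality is sharp, as you correctly observe.
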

\begin{proof}
It is sufficient to show
\begin{equation}
\label{loglog_main_error_ratio:goal}
\frac{\log r}{\log x\log \frac{x}{r}\left(\log\log xr-\log\log\frac{x}{r}\right)}\ll\frac{1}{\log\log x}.
\end{equation}
For $1\le r\le\sqrt{x}$, we have $\log x/r\gg\log x$
so by using \cref{lem:loglog_diff}
\begin{align}
\frac{\log r}{\log x\log \frac{x}{r}(\log\log xr-\log\log \frac{x}{r})}
&\ll\frac{\log r}{(\log x)^2(\log\log xr-\log\log \frac{x}{r})}\\
&\ll\frac{1}{\log x}\ll\frac{1}{\log\log x}.
\end{align}
Therefore,
\cref{loglog_main_error_ratio:goal} holds for $1\le r\le\sqrt{x}$.
On the other hand, if $\sqrt{x}< r\le x/4$,
\begin{align}
&\frac{\log r}{\log x\log \frac{x}{r}(\log\log xr-\log\log \frac{x}{r})}\\
&\ll
\frac{1}{\log \frac{x}{r}(\log\log xr-\log\log \frac{x}{r})}\\
&\ll
\frac{1}{\log\log \frac{x}{r}(\log\log xr-\log\log \frac{x}{r})}\\
&\ll
\frac{1}{\log\log xr}\left(\frac{1}{\log\log \frac{x}{r}}+\frac{1}{\log\log xr-\log\log \frac{x}{r}}\right).
\end{align}
By $\log r\gg\log x$, $\log\log \frac{x}{r}\ge\log\log 4$ and \cref{lem:loglog_diff}, this gives
\[
\frac{\log r}{\log x\log \frac{x}{r}(\log\log xr-\log\log \frac{x}{r})}
\ll
\frac{1}{\log\log xr}
\left(\frac{1}{\log\log4}+\frac{\log x}{\log r}\right)
\ll\frac{1}{\log\log x}.
\]
This completes the proof.
\end{proof}

We recall the following forms of the prime number theorems.

\begin{lemma}[Prime number theorem]
\label{lem:PNT}
For $x\ge2$, we have
\[
\pi(x)
=
\frac{x}{\log x}
+R_0(x),\quad
\pi(x)
=
\Li(x)+R_1(x),
\]
where
\[
\pi(x)
\coloneqq
\sum_{p\le x}1,\quad
\Li(x)\coloneqq\int_{2}^{x}\frac{du}{\log u},\quad
R_0(x)\ll\frac{x}{(\log x)^2},\quad
R_1(x)\ll
xe^{-c\sqrt{\log x}},
\]
where the constant $c>0$ and the implicit constant are absolute.
\end{lemma}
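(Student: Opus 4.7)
The plan is to derive both forms from a single classical source. The second form $\pi(x) = \Li(x) + R_1(x)$ with $R_1(x) \ll x\exp(-c\sqrt{\log x})$ is the de la Vallée Poussin prime number theorem. I would not reprove this from scratch; instead, I would invoke the standard argument, which combines Perron's formula with a contour shift into the zero-free region $\sigma \geq 1 - c/\log(|t|+2)$ of $\zeta(s)$, yielding the saddle-point type bound $\psi(x) - x \ll x\exp(-c\sqrt{\log x})$, then passing from $\psi$ to $\pi$ by partial summation. A standard reference such as Ivić's monograph delivers this in the stated form.

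To obtain the first form $\pi(x) = x/\log x + R_0(x)$ with $R_0(x) \ll x/(\log x)^2$, I would start from the second form and reduce the statement to the asymptotic expansion of $\Li(x)$. Integration by parts gives
\[
\Li(x) = \int_2^x \frac{du}{\log u} = \frac{x}{\log x} - \frac{2}{\log 2} + \int_2^x \frac{du}{(\log u)^2},
\]
and splitting the last integral at $\sqrt{x}$ shows it is $O(x/(\log x)^2)$: the part on $[2,\sqrt{x}]$ contributes $O(\sqrt{x})$, and the part on $[\sqrt{x}, x]$ contributes at most $x/(\log\sqrt{x})^2 \ll x/(\log x)^2$. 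Thus $\Li(x) = x/\log x + O(x/(\log x)^2)$, and combining this with the second form absorbs the much smaller error $x\exp(-c\sqrt{\log x})$ into $x/(\log x)^2$, yielding the first form.

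There is no substantive obstacle here, as the lemma merely records standard facts in a unified notation for later reference in the paper. The only mild care needed is verifying that both error terms are stated with absolute implicit constants, which follows from the corresponding uniformity in the classical de la Vallée Poussin theorem.
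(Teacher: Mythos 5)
Your proposal is correct and matches the paper's treatment: the paper simply cites the standard prime number theorem (Montgomery--Vaughan, Theorem~6.9) for both forms, exactly as you invoke the classical de la Vall\'ee Poussin result. Your explicit deduction of the first form from the second via the expansion $\Li(x)=x/\log x+O(x/(\log x)^2)$ is a correct and routine supplement that the paper leaves implicit.
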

\begin{proof}
See \cite[Theorem~6.9]{Montgomery_Vaughan}.
\end{proof}

\begin{lemma}[Prime number theorem]
\label{lem:PNTq}
Let $\chi\,\mod{Q}$ be a non-principal Dirichlet character.
Then, for $x\ge2$, we have
\[
\pi(x,\chi)\coloneqq\sum_{p\le x}\chi(p)
\ll
xe^{-c\sqrt{\log x}},
\]
where
the constant $c>0$ is absolute and
the implicit constant depends only on $Q$.
\end{lemma}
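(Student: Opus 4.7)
The plan is to reduce the desired estimate for $\pi(x,\chi)$ to the analogue of \cref{lem:PNT} for Dirichlet $L$-functions, and then to pass from the Chebyshev function to the prime counting function by the usual two-step transfer. Specifically, I would first introduce
\[
\psi(x,\chi)\coloneqq\sum_{n\le x}\Lambda(n)\chi(n),\quad
\theta(x,\chi)\coloneqq\sum_{p\le x}\chi(p)\log p
\]
and invoke the standard prime number theorem for non-principal Dirichlet $L$-functions, namely $\psi(x,\chi)\ll xe^{-c\sqrt{\log x}}$ with $c=c(Q)>0$ and an implicit constant depending only on $Q$. This is proved by Perron's formula applied to $-L'(s,\chi)/L(s,\chi)$, contour shifting past the classical zero-free region for $L(s,\chi)$, with Siegel's theorem handling the potential exceptional real zero (making $c(Q)$ ineffective in that case, which is harmless here since we are allowed $Q$-dependence).

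Second, I would discard the prime-power contributions via
\[
\psi(x,\chi)-\theta(x,\chi)
=
\sum_{k\ge2}\sum_{p^k\le x}\chi(p)^k\log p
\ll
\sum_{k\ge2}x^{1/k}\log x
\ll
\sqrt{x}\log x,
\]
so that the same bound $\theta(x,\chi)\ll xe^{-c\sqrt{\log x}}$ persists after adjusting $c$ slightly. Third, partial summation gives
\[
\pi(x,\chi)
=
\frac{\theta(x,\chi)}{\log x}
+
\int_{2}^{x}\frac{\theta(u,\chi)}{u(\log u)^2}du,
\]
and a routine splitting of the integral at $u=\sqrt{x}$, combined with the monotonicity of $u\mapsto u\exp(-c\sqrt{\log u})$ established in \cref{lem:monotone}, bounds the right-hand side by $\ll xe^{-c'\sqrt{\log x}}$ for some $0<c'<c$, delivering the claim.

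The only non-trivial step is the first one, i.e.\ the zero-free region input for $L(s,\chi)$; however, this is entirely classical and available as a textbook reference (e.g.\ \cite[Chapter~11]{Montgomery_Vaughan}), so the main obstacle is really just invoking the correct form of the character-twisted prime number theorem. Everything else is bookkeeping of the same type already used in the proof of \cref{lem:PNT}.
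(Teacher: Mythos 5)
Your argument is correct in outline, but it takes a genuinely different (and more self-contained) route than the paper: the paper simply cites \cite[Exercise~5, p.~383]{Montgomery_Vaughan} and adds only the trivial remark that for $Q>\log x$ one has $\pi(x,\chi)\ll x\ll e^{Q}\ll_{Q}1$, so that the $Q$-uniformity of the cited result is irrelevant; you instead reprove the statement from the explicit formula via $\psi(x,\chi)\to\theta(x,\chi)\to\pi(x,\chi)$, which is the standard textbook derivation and is perfectly adequate here. One point does need repair: the lemma asserts that $c$ is \emph{absolute} and only the implicit constant depends on $Q$, whereas you invoke $\psi(x,\chi)\ll xe^{-c(Q)\sqrt{\log x}}$ with $c=c(Q)$; a bound with a $Q$-dependent exponent constant does not formally imply one with an absolute constant, since decreasing $c$ weakens the bound. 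The fix is standard and should be said explicitly: the zero-free region gives $\psi(x,\chi)=-x^{\beta_1}/\beta_1+O\left(xe^{-c\sqrt{\log x}}\right)$ with $c$ and the $O$-constant absolute (for $Q$ in the admissible range, the complementary range being trivial as in the paper), and for fixed $Q$ the possible exceptional zero satisfies $1-\beta_1\gg_{Q}1$, so $x^{\beta_1}=x\,e^{-(1-\beta_1)\log x}\ll_{Q}xe^{-c\sqrt{\log x}}$ for \emph{any} absolute $c$; thus the $Q$-dependence can be pushed entirely into the implied constant, as required. With that adjustment, your prime-power estimate and the partial-summation transfer (splitting the integral at $\sqrt{x}$) are routine and correct, so the proof goes through.
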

\begin{proof}
See \cite[Exercise~5, p.\ 383]{Montgomery_Vaughan}.
Note that if $Q>(\log x)$, then we have
\[
\pi(x,\chi)
\ll
x
\ll
e^{Q}
\ll_{Q}
1
\]
so the assertion is trivial since we allow the implicit constant to depend on $Q$.
\end{proof}

The next well-known estimate
is convenient when $r$ is close to $1$.

\begin{lemma}
\label{lem:sieve}
For any $x\ge 0$ and $h\ge2$, we have
\[
\pi(x+h)-\pi(x)
\ll
\frac{h}{\log h},
\]
where the implicit constant is absolute.
\end{lemma}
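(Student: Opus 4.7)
The plan is to deduce this bound from the classical Brun--Titchmarsh inequality, which one may establish via the Selberg upper bound sieve. First, I would dispense with small $h$: if $h$ is bounded by some absolute constant $h_0$, then $\pi(x+h)-\pi(x)\le h \ll_{h_0} h/\log h$, so we may assume $h$ to be sufficiently large.

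Set $z:=\sqrt{h}$ and $P(z):=\prod_{p<z}p$. Every prime in the interval $(x,x+h]$ either lies below $z$ or is coprime to $P(z)$, and therefore
\[
\pi(x+h)-\pi(x) \le \pi(z) + \#\{n\in(x,x+h] : \gcd(n,P(z))=1\}.
\]
The first term is $\ll \sqrt{h}/\log h$ by \cref{lem:PNT}. For the second term I would invoke Selberg's sieve in its standard form, which gives
\[
\#\{n\in(x,x+h] : \gcd(n,P(z))=1\} \le \frac{h}{\sum_{d<z}\mu^2(d)/\phi(d)} + O(z^2).
\]
Using the well-known lower bound $\sum_{d<z}\mu^2(d)/\phi(d) \gg \log z$ together with the choice $z=\sqrt{h}$ yields a contribution of $\ll h/\log h$. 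Summing the two estimates gives the claim.

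The main obstacle is purely technical: one must verify that the Selberg bound is applied uniformly in $x\ge 0$ with no side constraints, and that the trivial bound for small $h$ absorbs the ensuing constants. Since the statement is entirely classical — the Brun--Titchmarsh inequality $\pi(x+h)-\pi(x)\le 2h/\log h$ for $h\ge 2$ is a standard result (see, for instance, Theorem~3.9 of Montgomery--Vaughan) — the cleanest course of action is simply to cite such a reference and absorb the numerical factor into the implied constant, bypassing the sieve computation entirely.
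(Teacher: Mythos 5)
Your final recommendation --- cite the Brun--Titchmarsh bound from Montgomery--Vaughan --- is exactly what the paper does (it simply quotes Corollary~3.4 there), so the approach matches. One caveat about your sieve sketch: with $z=\sqrt{h}$ the Selberg error term $O(z^2)=O(h)$ swamps the target bound $h/\log h$, so you would need $z$ slightly smaller, e.g.\ $z=\sqrt{h}/\log h$ or $z=h^{1/3}$, which still gives $\sum_{d<z}\mu^2(d)/\phi(d)\gg\log h$ and hence the same main term.
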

\begin{proof}
See \cite[Corollary~3.4]{Montgomery_Vaughan}.
\end{proof}

\section{Proof of Theorems \ref{thm:large_int} and \ref{thm:large} (The large $r$ case)}
In this section, we consider the case $r$ is large.
In particular, our argument in this section
has importance only in the range
\[
r_0\le r\le x/4,\quad
r_0=r_0(x)\coloneqq1+\exp(-c\sqrt{\log x}).
\]
Note that the condition $r\le x/4$ implies $\sqrt{x/r}\ge2$.
The upper bound $r\le x/4$ for $r$ is not an actual restriction since if $r>x/4$,
then by \cref{lem:to_Landau}, we see that $\pi_{2}(x;r)$ is reduced to $\pi_2(x)$
and we can apply \cref{Landau_asymp}. To prove Theorems~\ref{thm:large_int} and \ref{thm:large}, we need the following three lemmas.

\begin{lemma}
\label{lem:sum3}
For $x\ge2$, we have
\[
\sum_{p\le\sqrt{x}}\frac{p}{\log p}
=
\frac{2x}{(\log x)^2}+O\left(\frac{x}{(\log x)^3}\right)
\]
and 
\[
\sum_{p\le\sqrt{x}}\pi(p)
=
\frac{1}{2}\Li(\sqrt{x})^2
+O\left(xe^{-c\sqrt{\log x}}\right),
\]
where the constant $c>0$ and the implicit constants are absolute.
\end{lemma}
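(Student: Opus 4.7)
The plan is to treat the two identities independently, each by combining partial summation or an elementary counting argument with the two forms of the prime number theorem recorded in \cref{lem:PNT}.

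For the first sum I would apply Abel summation with weight $f(t)=t/\log t$:
\[
\sum_{p\le\sqrt{x}}\frac{p}{\log p}
=
\int_{2^-}^{\sqrt{x}}\frac{t}{\log t}\,d\pi(t).
\]
Decomposing $\pi(t)=t/\log t+R_0(t)$ from \cref{lem:PNT}, so that $R_0(t)\ll t/(\log t)^2$, the main contribution is
\[
\int_{2}^{\sqrt{x}}\frac{t}{\log t}\,d\!\left(\frac{t}{\log t}\right)
=
\frac{1}{2}\left(\frac{\sqrt{x}}{\log\sqrt{x}}\right)^{\!2}+O(1)
=
\frac{2x}{(\log x)^2}+O(1),
\]
using $\log\sqrt{x}=\tfrac{1}{2}\log x$. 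The remaining piece $\int_{2^-}^{\sqrt{x}}(t/\log t)\,dR_0(t)$ is handled by one more integration by parts; the boundary term is $\ll(\sqrt{x}/\log\sqrt{x})\,R_0(\sqrt{x})\ll x/(\log x)^3$, and the leftover integral is $\ll\int_2^{\sqrt{x}}t/(\log t)^3\,dt\ll x/(\log x)^3$ by the standard estimate. Combining these contributions gives the first asymptotic.

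For the second sum, rather than partial summation I would use the elementary double-counting identity
\[
\sum_{p\le\sqrt{x}}\pi(p)
=
\#\{(q,p):q,p\text{ prime},\ q\le p\le\sqrt{x}\}
=
\binom{\pi(\sqrt{x})}{2}+\pi(\sqrt{x})
=
\frac{\pi(\sqrt{x})^2+\pi(\sqrt{x})}{2}.
\]
Inserting $\pi(\sqrt{x})=\Li(\sqrt{x})+R_1(\sqrt{x})$ with $R_1(\sqrt{x})\ll\sqrt{x}\,e^{-c\sqrt{\log\sqrt{x}}}$ from \cref{lem:PNT} and expanding the square, the cross term is $\ll\Li(\sqrt{x})R_1(\sqrt{x})\ll(x/\log x)e^{-c'\sqrt{\log x}}$, the squared remainder is even smaller, and the linear term $\tfrac{1}{2}\pi(\sqrt{x})\ll\sqrt{x}/\log\sqrt{x}$ is absorbed into $O(xe^{-c\sqrt{\log x}})$ for $x$ large. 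After adjusting constants the second claim follows.

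The main obstacle is purely bookkeeping: the substitution $y=\sqrt{x}$ shrinks $\sqrt{\log y}$ by a factor of $\sqrt{2}$, forcing a harmless modification of the absolute constant $c$ in the exponential error, and one has to verify that $\sqrt{x}/\log\sqrt{x}$ and the boundary contributions at $t=2$ all fit inside the stated error terms, which they do. No deeper analytic input beyond \cref{lem:PNT} is needed.
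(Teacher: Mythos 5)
Your proof is correct, and its key step --- the identity $\sum_{p\le\sqrt{x}}\pi(p)=\tfrac12\bigl(\pi(\sqrt{x})^2+\pi(\sqrt{x})\bigr)$, which you obtain by counting pairs of primes and the paper obtains as $\sum_{n=1}^{\pi(\sqrt{x})}n$ --- is exactly the paper's. The only (harmless) divergence is in the first asymptotic, which you prove directly by partial summation against $\pi(t)=t/\log t+R_0(t)$, whereas the paper deduces it from the second assertion via $\pi(p)=p/\log p+O\bigl(p/(\log p)^2\bigr)$ and the bound $\sum_{p\le\sqrt{x}}p/(\log p)^2\ll x/(\log x)^3$.
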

\begin{proof}
We may assume $x\ge4$.
By \cref{lem:PNT}, we have
\begin{equation}
\label{third_sum:PNT1}
\sum_{p\le\sqrt{x}}\pi(p)=\sum_{p\le\sqrt{x}}\frac{p}{\log p}+O\left(\sum_{p\le\sqrt{x}}\frac{p}{(\log p)^2}\right).
\end{equation}
This error term is estimated as
\begin{equation}
\label{third_sum:error1}
\begin{aligned}
\sum_{p\le\sqrt{x}}\frac{p}{(\log p)^2}
&\ll
\frac{\sqrt{x}}{(\log\sqrt{x})^2}\sum_{p\le\sqrt{x}}1
\ll
\frac{\sqrt{x}}{(\log\sqrt{x})^2}\frac{\sqrt{x}}{(\log\sqrt{x})}
\ll
\frac{x}{(\log x)^3}.
\end{aligned}
\end{equation}
Thus, the first assertion is an easy consequence of the latter assertion.
We have
\[
\sum_{p\le\sqrt{x}}\pi(p)
=
\sum_{n=1}^{\pi(\sqrt{x})}n
=
\frac{1}{2}(\pi(\sqrt{x})^2+\pi(\sqrt{x})).
\]
By substituting \cref{lem:PNT},
we obtain the second assertion.
\end{proof}

\begin{lemma}
\label{lem:sum2}
For $1\le r\le x/4$, we have
\[
\sum_{p\le\sqrt{x/r}}\pi(rp)
=
\int_{2}^{\sqrt{x/r}}\frac{\Li(ru)}{\log u}du
+O\left(\frac{x}{\log x}e^{-c\sqrt{\log\frac{x}{r}}}\right),
\]
where the constant $c>0$ and the implicit constant
are absolute.
\end{lemma}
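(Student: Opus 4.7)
Set $y \coloneqq \sqrt{x/r}$ (so $y \ge 2$ since $r \le x/4$). The strategy is to apply \cref{lem:PNT} in the form $\pi(v) = \Li(v) + R_1(v)$ with $R_1(v) \ll v e^{-c\sqrt{\log v}}$ and split
\[
\sum_{p \le y}\pi(rp) = \sum_{p\le y}\Li(rp) + \sum_{p\le y}R_1(rp),
\]
writing $S_1$ and $S_2$ for the two sums. The bulk of the work will fall on $S_1$, whose partial summation will produce the integral main term together with residual errors, while $S_2$ will be an easier remainder.

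For $S_1$, Abel summation gives $S_1 = \pi(y)\Li(ry) - \int_2^y \pi(u)\,r\,du/\log(ru)$. Substituting $\pi = \Li + R_1$ into both factors and combining the $\Li$-only contributions via the integration-by-parts identity
\[
\int_2^y \Li(ru)\frac{du}{\log u} = \Li(y)\Li(ry) - \int_2^y \Li(u)\frac{r\,du}{\log(ru)}
\]
leaves exactly the desired main term $\int_2^y \Li(ru)\,du/\log u$ plus two error terms $E_1 \coloneqq R_1(y)\Li(ry)$ and $E_2 \coloneqq \int_2^y R_1(u)\,r\,du/\log(ru)$.

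Bounds on $E_1$, $E_2$, and $S_2$ follow from $|R_1(v)|\ll v e^{-c\sqrt{\log v}}$ and $\Li(v) \ll v/\log v$. One has $|E_1| \ll (x/\log(ry))\,e^{-c\sqrt{\log y}}$. For $E_2$, I would bound $r/\log(ru) \le r/\log u$ and use that $u(\log u)^{-1}e^{-c\sqrt{\log u}}$ is eventually increasing, obtaining $|E_2| \ll (ry^2/\log y)\,e^{-c\sqrt{\log y}} = (x/\log y)\,e^{-c\sqrt{\log y}}$. A second Abel summation, combined with $\pi(y) \ll y/\log y$ and the eventual monotonicity of $v \mapsto v e^{-c\sqrt{\log v}}$ from \cref{lem:monotone}, gives $|S_2| \ll (x/\log y)\,e^{-c\sqrt{\log(ry)}}$.

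Each of the three error bounds becomes, using $\log y = \tfrac12\log(x/r) \ge \log 2$ and $\log(ry) = \tfrac12\log(xr) \ge \tfrac12\log x$, at most $\ll x\,e^{-c'\sqrt{\log x}}$ for some $c' > 0$. The target bound $O\bigl(\tfrac{x}{\log x}e^{-c\sqrt{\log(x/r)}}\bigr)$ then follows by the elementary absorption inequality $\log x \ll e^{(c'-c'')\sqrt{\log x}}$ valid for any $c'' < c'$ and $x$ bounded away from $0$ (small $x$ handled trivially), together with $\sqrt{\log x} \ge \sqrt{\log(x/r)}$. The main obstacle will be precisely this last absorption: the natural partial-summation output carries denominators like $1/\log y$ or $1/\log(2r)$, which are genuinely weaker than $1/\log x$ in the regime $r \approx x/4$, and one has to exploit the full exponential strength of the PNT error (not just a polynomial decay) to recover the claimed $1/\log x$ denominator.
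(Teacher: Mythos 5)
Your overall route is the same as the paper's: insert the prime number theorem pointwise to write the sum as $\sum_{p\le y}\Li(rp)+\sum_{p\le y}R_1(rp)$, then use Abel summation on the first sum, which yields the main term $\int_2^y\Li(ru)\,du/\log u$ plus the two errors $E_1=R_1(y)\Li(ry)$ and $E_2=\int_2^y R_1(u)\,r\,du/\log(ru)$. Your treatment of $S_2$ and $E_1$ is sound: for $S_2$ the exponential involves $\sqrt{\log(ry)}\ge\tfrac{1}{\sqrt 2}\sqrt{\log x}$, so it can absorb any power of $\log x$, and $E_1\ll\frac{x}{\log(ry)}e^{-c\sqrt{\log y}}\ll\frac{x}{\log x}e^{-(c/\sqrt2)\sqrt{\log(x/r)}}$ is already the target directly. (Your intermediate claim that all three errors are $\ll x e^{-c'\sqrt{\log x}}$ is false for $E_1$ and $E_2$, whose exponentials decay only in $\sqrt{\log y}$, not $\sqrt{\log x}$; for $E_1$ this does not matter since the direct bound suffices.)

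The genuine gap is $E_2$. The step ``bound $r/\log(ru)\le r/\log u$'' discards exactly the saving that produces the $1/\log x$ in the target. Your resulting bound $\frac{x}{\log y}\,e^{-c\sqrt{\log y}}$ with $\log y=\tfrac12\log(x/r)$ is of size $\asymp x$ when $r\asymp x$ (so $y$ is bounded), while the target is $\ll x/\log x$ there; and the absorption $\log x\ll e^{\epsilon\sqrt{\log x}}$ you invoke is unavailable, because the only exponential decay present in $E_2$ is in $\sqrt{\log(x/r)}$, which stays bounded in that regime. The repair is to keep $1/\log(ru)$ intact: bound $|R_1(u)|\ll u e^{-c\sqrt{\log u}}\ll y\,e^{-(c/\sqrt2)\sqrt{\log(x/r)}}$ uniformly for $2\le u\le y$ via \cref{lem:monotone}, and then evaluate the remaining integral by the substitution $v=ru$, namely $\int_2^{y}\frac{r\,du}{\log(ru)}=\int_{2r}^{\sqrt{xr}}\frac{dv}{\log v}\ll\Li(\sqrt{xr})\ll\frac{\sqrt{xr}}{\log x}$; the product is then $\frac{x}{\log x}e^{-(c/\sqrt2)\sqrt{\log(x/r)}}$, which is what the lemma asserts. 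This is precisely how the paper handles this term.
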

\begin{proof}
On inserting \cref{lem:PNT},
\[
\sum_{p\le\sqrt{x/r}}\pi(rp)
=
\sum_{p\le\sqrt{x/r}}\Li(rp)
+
O\left(\sum_{p\le\sqrt{x/r}}rp\exp(-c\sqrt{\log rp})\right).
\]
By using \cref{lem:monotone},
this error term is estimated as
\begin{align}
\sum_{p\le\sqrt{x/r}}rp\exp(-c\sqrt{\log rp})
&\ll
\sqrt{xr}\exp(-c\sqrt{\log xr})\sum_{p\le\sqrt{x/r}}1\\
&\ll
\sqrt{xr}\exp(-c\sqrt{\log x})\sqrt{x/r}
=
x\exp(-c\sqrt{\log x}).
\end{align}
Thus,
\begin{equation}
\label{sum2:PNT}
\sum_{p\le\sqrt{x/r}}\pi(rp)
=
\sum_{p\le\sqrt{x/r}}\Li(rp)
+
O\left(x\exp(-c\sqrt{\log x})\right).
\end{equation}
By partial summation and \cref{lem:PNT},
the sum on the right-hand side above is
\begin{equation}
\label{sum2:partsum}
\begin{aligned}
\sum_{p\le\sqrt{x/r}}\Li(rp)
&=
\int_{2-}^{\sqrt{x/r}}\Li(ru)d\pi(u)\\
&=
\int_{2}^{\sqrt{x/r}}\frac{\Li(ru)}{\log u}du
+
\int_{2-}^{\sqrt{x/r}}\Li(ru)dR_1(u).
\end{aligned}
\end{equation}
For the error term,
by integrating by parts and using \cref{lem:monotone}, we have
\begin{align}
\int_{2-}^{\sqrt{x/r}}\Li(ru)dR_1(u)
&=
\Li(\sqrt{xr})R_{1}(\sqrt{x/r})
-\int_{2}^{\sqrt{x/r}}\frac{rR_{1}(u)}{\log ru}du\\
&\ll
\frac{\sqrt{xr}}{(\log\sqrt{xr})}\sqrt{x/r}e^{-c\sqrt{\log\frac{x}{r}}}
+\int_{2}^{\sqrt{x/r}}\frac{rue^{-c\sqrt{\log u}}}{\log ru}du\\
&\ll
\frac{x}{\log x}e^{-c\sqrt{\log\frac{x}{r}}}
+
\sqrt{x/r}e^{-c\sqrt{\log\frac{x}{r}}}
\int_{2}^{\sqrt{x/r}}\frac{rdu}{\log ru}.
\end{align}
The last integral is
\begin{align}
\sqrt{x/r}e^{-c\sqrt{\log\frac{x}{r}}}
\int_{2}^{\sqrt{x/r}}\frac{rdu}{\log ru}
&=
\sqrt{x/r}e^{-c\sqrt{\log\frac{x}{r}}}
\int_{2r}^{\sqrt{xr}}\frac{du}{\log u}\\
&\ll
\sqrt{x/r}e^{-c\sqrt{\log\frac{x}{r}}}\Li(\sqrt{xr})
\ll
\frac{x}{\log x}e^{-c\sqrt{\log\frac{x}{r}}}.
\end{align}
Therefore,
\[
\int_{2-}^{\sqrt{x/r}}\Li(ru)dR_1(u)
\ll
\frac{x}{\log x}e^{-c\sqrt{\log\frac{x}{r}}}.
\]
By combining this estimate with \cref{sum2:PNT} and \cref{sum2:partsum},
we arrive at the lemma.
\end{proof}

\begin{lemma}
\label{lem:sum1}
For $1\le r\le x/4$, we have
\[
\sum_{\sqrt{x/r}<p\le\sqrt{x}}\pi\left(\frac{x}{p}\right)
=
\int_{\sqrt{x/r}}^{\sqrt{x}}\frac{\Li(\frac{x}{u})}{\log u}du
+O\left(\frac{x}{\log x}e^{-c\sqrt{\log\frac{x}{r}}}\right),
\]
where the constant $c>0$ and the implicit constant are absolute.
\end{lemma}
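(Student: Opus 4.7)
The plan is to follow the template of \cref{lem:sum2} with the roles of the variables adjusted to the present summation range. First, I would apply \cref{lem:PNT} to decompose
\[
\sum_{\sqrt{x/r}<p\le\sqrt{x}}\pi(x/p)
=
\sum_{\sqrt{x/r}<p\le\sqrt{x}}\Li(x/p)
+
\sum_{\sqrt{x/r}<p\le\sqrt{x}}R_1(x/p).
\]
Since $p\le\sqrt{x}$ forces $x/p\ge\sqrt{x}$ and hence $\log(x/p)\ge\tfrac{1}{2}\log x$, the $R_1$-contribution is controlled directly by
\[
\sum_{p\le\sqrt{x}}\frac{x}{p}e^{-c\sqrt{\log(x/p)}}
\ll
xe^{-(c/\sqrt{2})\sqrt{\log x}}\sum_{p\le\sqrt{x}}\frac{1}{p}
\ll
x\log\log x\cdot e^{-(c/\sqrt{2})\sqrt{\log x}},
\]
which is absorbed into the claimed error term after noting that polynomial factors in $\log x$ are dominated by any slightly weaker exponential and using $\log x\ge\log(x/r)$.

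Next, I would apply partial summation to the $\Li$-sum together with $d\pi(u)=du/\log u+dR_1(u)$ to isolate the claimed main term:
\[
\sum_{\sqrt{x/r}<p\le\sqrt{x}}\Li(x/p)
=
\int_{\sqrt{x/r}}^{\sqrt{x}}\frac{\Li(x/u)}{\log u}\,du
+
\int_{\sqrt{x/r}}^{\sqrt{x}}\Li(x/u)\,dR_1(u).
\]
An integration by parts on the remaining Stieltjes integral, using $\frac{d}{du}\Li(x/u)=-x/(u^2\log(x/u))$, produces the boundary terms $\Li(\sqrt{x})R_1(\sqrt{x})-\Li(\sqrt{xr})R_1(\sqrt{x/r})$ plus the integral $\int_{\sqrt{x/r}}^{\sqrt{x}}xR_1(u)/(u^2\log(x/u))\,du$. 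The boundary terms are handled essentially as in the proof of \cref{lem:sum2}: estimates using \cref{lem:monotone} together with $\log(xr)\ge\log x$ and $\log\sqrt{x/r}=\tfrac{1}{2}\log(x/r)$ give a contribution of the required order $(x/\log x)e^{-c\sqrt{\log(x/r)}}$.

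The principal obstacle is the remaining integral. A naive bound using $|R_1(u)|\ll ue^{-c\sqrt{\log u}}$ and factoring the exponential by its maximum value leaves a factor $\int du/u=\tfrac{1}{2}\log r$, which is fatal when $r$ is close to $x/4$; unlike the situation in \cref{lem:sum2}, here the variable $u$ lies in the \emph{upper} half of the logarithmic range, so we cannot afford such a factor. To circumvent this, I would exploit the uniform lower bound $\log(x/u)\ge\tfrac{1}{2}\log x$ valid on $[\sqrt{x/r},\sqrt{x}]$, which yields
\[
\int_{\sqrt{x/r}}^{\sqrt{x}}\frac{xR_1(u)}{u^2\log(x/u)}\,du
\ll
\frac{x}{\log x}\int_{\sqrt{x/r}}^{\sqrt{x}}\frac{e^{-c\sqrt{\log u}}}{u}\,du.
\]
The substitution $t=\sqrt{\log u}$ then converts the last integral into $2\int_{\alpha}^{\beta}te^{-ct}\,dt$ with $\alpha=\sqrt{\tfrac{1}{2}\log(x/r)}$, which is bounded by $2\int_{\alpha}^{\infty}te^{-ct}\,dt=\tfrac{2(c\alpha+1)}{c^2}e^{-c\alpha}\ll e^{-c'\sqrt{\log(x/r)}}$ for any $c'<c/\sqrt{2}$. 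Combining this with the prefactor $x/\log x$ yields the desired error bound and completes the argument.
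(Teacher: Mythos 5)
Your proposal is correct and follows essentially the same route as the paper: apply \cref{lem:PNT}, bound the $R_1$-contribution directly (the paper packages this as \cref{lem:x_p}), use partial summation plus integration by parts, and control the remaining integral via the uniform bound $\log(x/u)\gg\log x$ on $[\sqrt{x/r},\sqrt{x}]$. The only (immaterial) difference is in the final integral $\int_{\sqrt{x/r}}^{\sqrt{x}}u^{-1}e^{-c\sqrt{\log u}}\,du$, which you evaluate by the substitution $t=\sqrt{\log u}$ while the paper factors out $e^{-c\sqrt{\log(x/r)}}$ at the cost of halving $c$ and is left with the convergent integral $\int u^{-1}(\log u)^{-2}\,du$.
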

\begin{proof}
By \cref{lem:x_p} and \cref{lem:PNT}, the above sum is
\begin{equation}
\label{sum1:PNT}
\begin{aligned}
\sum_{\sqrt{x/r}<p\le\sqrt{x}}\pi\left(\frac{x}{p}\right)
&=
\sum_{\sqrt{x/r}<p\le\sqrt{x}}\Li\left(\frac{x}{p}\right)
+O\left(\sum_{\sqrt{x/r}<p\le\sqrt{x}}
\frac{x}{p}e^{-c\sqrt{\log\frac{x}{p}}}\right)\\
&=
\sum_{\sqrt{x/r}<p\le\sqrt{x}}\Li\left(\frac{x}{p}\right)
+O\left(xe^{-c\sqrt{\log x}}\right).
\end{aligned}
\end{equation}
By partial summation, we have
\begin{equation}
\label{sum1:partsum}
\begin{aligned}
\sum_{\sqrt{x/r}<p\le\sqrt{x}}\Li\left(\frac{x}{p}\right)
&=
\int_{\sqrt{x/r}}^{\sqrt{x}}\Li\left(\frac{x}{u}\right)d\pi(u)\\
&=
\int_{\sqrt{x/r}}^{\sqrt{x}}\frac{\Li\left(\frac{x}{u}\right)}{\log u}du
+
\int_{\sqrt{x/r}}^{\sqrt{x}}\Li\left(\frac{x}{u}\right)dR_1(u).
\end{aligned}
\end{equation}
For the error term,
we use integration by parts to obtain
\begin{align}
\int_{\sqrt{x/r}}^{\sqrt{x}}\Li\left(\frac{x}{u}\right)dR_1(u)
&=
\Li(\sqrt{x})R_1(\sqrt{x})
-
\Li(\sqrt{xr})R_1(\sqrt{x/r})
+\int_{\sqrt{x/r}}^{\sqrt{x}}\frac{xR_1(u)}{u^2\log\frac{x}{u}}du\\
&\ll
\frac{x}{\log x}e^{-c\sqrt{\log\frac{x}{r}}}
+
\frac{x}{\log x}
\int_{\sqrt{x/r}}^{\sqrt{x}}\frac{e^{-c\sqrt{\log u}}}{u}du.
\end{align}
The last term is bounded as
\begin{equation}
\label{sum1:int_u_exp}
\begin{aligned}
\frac{x}{\log x}
\int_{\sqrt{x/r}}^{\sqrt{x}}\frac{e^{-c\sqrt{\log u}}}{u}du
&\ll
\frac{x}{\log x}e^{-c\sqrt{\log\frac{x}{r}}}
\int_{\sqrt{x/r}}^{\sqrt{x}}\frac{du}{u(\log u)^2}\\
&\ll
\frac{x}{\log x}e^{-c\sqrt{\log\frac{x}{r}}}
\end{aligned}
\end{equation}
by changing the value of $c$.
Therefore,
\[
\int_{\sqrt{x/r}}^{\sqrt{x}}\Li\left(\frac{x}{u}\right)dR_1(u)
\ll
\frac{x}{\log x}e^{-c\sqrt{\log\frac{x}{r}}}.
\]
On combining this estimate with \cref{sum1:partsum} and \cref{sum1:PNT},
we obtain the desired result.
\end{proof}

Now we are ready to prove Theorem~\ref{thm:large_int}.

\begin{proof}[Proof of \cref{thm:large_int}]
We just start as in the preceding works \cite{Decker_Moree,Moree_SaadEddin}.
We have
\[
\pi_{2}(x;r)=\sum_{\substack{pq\le x\\p<q\le rp}}1=\sum_{p\le x}\sum_{p<q\le\min(rp,x/p)}1.
\]
For $p>\sqrt{x}$, we have $x/p<p$ so that the inner sum is empty. Thus,
\begin{equation}
\label{large_int:doublesum}
\pi_{2}(x;r)=\sum_{p\le\sqrt{x}}\sum_{p<q\le\min(rp,x/p)}1.
\end{equation}
Since the minimum in the inner sum is determined as
\begin{equation}
\label{large_int:min}
\min\left(rp,x/p\right)
=
\left\{
\begin{array}{cl}
\displaystyle
rp&   \quad \text{if \ \ $p\le\sqrt{x/r}$},\\[1mm]
\displaystyle
x/p&   \quad \text{if\ \ $\sqrt{x/r}<p$},
\end{array}
\right.
\end{equation}
we can dissect the above expression \cref{large_int:doublesum} into three parts as
\begin{equation}
\label{large_int:decomp}
\pi_{2}(x;r)=
\sum_{\sqrt{x/r}<p\le\sqrt{x}}\pi\left(\frac{x}{p}\right)
+
\sum_{p\le\sqrt{x/r}}\pi(rp)
-
\sum_{p\le\sqrt{x}}\pi(p).
\end{equation}
Hence, by Lemmas~\ref{lem:sum3}, \ref{lem:sum2} and \ref{lem:sum1}, the function $\pi_{2}(x;r)$ is rewritten as 
\begin{equation}
\label{large_int:prefinal}
\pi_{2}(x;r)
=
G_{r}(x)+O\left(\frac{x}{\log x}e^{-c\sqrt{\log\frac{x}{r}}}\right),
\end{equation}
where
\begin{equation}
\label{large_int:G}
G_{r}(x)
\coloneqq
\int_{2}^{\sqrt{x/r}}\frac{\Li(ru)}{\log u}du
+\int_{\sqrt{x/r}}^{\sqrt{x}}\frac{\Li(\frac{x}{u})}{\log u}du
-\frac{1}{2}\Li(\sqrt{x})^2.
\end{equation}
We now apply the idea of Moree and the first author \cite{Moree_SaadEddin}
with some modification necessary for keeping the uniformity over $r$.
By taking the derivative,
\begin{align}
\frac{dG_r(x)}{dx}
&=
\frac{1}{2\sqrt{xr}}\frac{\Li(\sqrt{xr})}{\log\sqrt{x/r}}
+
\frac{1}{2\sqrt{x}}\frac{\Li(\sqrt{x})}{\log\sqrt{x}}
-
\frac{1}{2\sqrt{xr}}\frac{\Li(\sqrt{xr})}{\log\sqrt{x/r}}
-
\frac{\Li(\sqrt{x})}{2\sqrt{x}\log\sqrt{x}}\\
&\hspace{3cm}
+\int_{\sqrt{x/r}}^{\sqrt{x}}
\frac{du}{u\log u\log\frac{x}{u}}\\
&=
\frac{1}{\log x}
\int_{\sqrt{x/r}}^{\sqrt{x}}\frac{du}{u\log u}
+
\frac{1}{\log x}
\int_{\sqrt{x/r}}^{\sqrt{x}}\frac{du}{u\log\frac{x}{u}}\\
&=
\frac{1}{\log x}
\int_{\sqrt{x/r}}^{\sqrt{xr}}\frac{du}{u\log u}
=
\left(\log\log xr-\log\log\frac{x}{r}\right)\frac{1}{\log x}.
\end{align}
Therefore, by \cref{large_int:prefinal},
\begin{align}
\pi_{2}(x;r)
&=
G_r(x)-G_r(4r)+G_r(4r)+O\left(\frac{x}{\log x}e^{-c\sqrt{\log\frac{x}{r}}}\right)\\
\label{large_int:upto_G}
&=
\int_{4r}^{x}\left(\log\log ur-\log\log\frac{u}{r}\right)\frac{du}{\log u}
+G_r(4r)+O\left(\frac{x}{\log x}e^{-c\sqrt{\log\frac{x}{r}}}\right).
\end{align}
Then it suffices to consider $G_{r}(4r)$. By definition \cref{large_int:G}, we have
\begin{align}
G_r(4r)
&=
\int_{2}^{\sqrt{4r}}\frac{\Li(\frac{4r}{u})}{\log u}du
-
\frac{1}{2}\Li(\sqrt{4r})^2\\
&=
\int_{2}^{\sqrt{4r}}\frac{4r}{u\log u\log\frac{4r}{u}}du
+
O\left(
\int_{2}^{\sqrt{4r}}\frac{r}{u(\log u)(\log\frac{4r}{u})^2}du
+
\frac{r}{(\log4r)^2}\right)\\
&=
\frac{4r}{\log 4r}
\int_{2}^{\sqrt{4r}}\frac{du}{u\log u}
+
\frac{4r}{\log 4r}
\int_{2}^{\sqrt{4r}}\frac{du}{u\log\frac{4r}{u}}
+
O\left(
\frac{r\log\log4r}{(\log4r)^2}
\right)\\
&=
\frac{4r}{\log 4r}
\int_{2}^{2r}\frac{du}{u\log u}
+
O\left(
\frac{r\log\log4r}{(\log4r)^2}
\right)
=
\frac{4r\log\log4r}{\log4r}
+
O\left(\frac{r}{\log 4r}\right).
\end{align}
By substituting this into \cref{large_int:upto_G}
and using \cref{lem:r_logr}, we obtain
\[
\pi_{2}(x;r)
=
\int_{4r}^{x}\left(\log\log ur-\log\log\frac{u}{r}\right)\frac{du}{\log u}
+\frac{4r\log\log 4r}{\log 4r}
+O\left(\frac{x}{\log x}e^{-c\sqrt{\log\frac{x}{r}}}\right).
\]
This completes the proof of ~\cref{thm:large_int}.
\end{proof}

\begin{remark}
The function $G_{r}(x)$ defined in \cref{large_int:G} coincides
with the function
\begin{equation}
\label{MS:G}
G_{r}(x)
\coloneqq
\frac{1}{2}\Li(\sqrt{x})^2
-\int_{2r}^{\sqrt{xr}}\frac{\Li(\frac{t}{r})}{\log t}dt
+\int_{\sqrt{x}}^{\sqrt{xr}}\frac{\Li(\frac{x}{t})}{\log t}dt
\end{equation}
of \cite{Moree_SaadEddin}.
Indeed, we have
\begin{align}
\int_{\sqrt{x/r}}^{\sqrt{x}}
\frac{\Li\left(\frac{x}{u}\right)}{\log u}du
&=
\int_{\sqrt{x/r}}^{\sqrt{x}}\frac{du}{\log u}
\int_{2}^{x/u}\frac{dt}{\log t}
=
\int_{2}^{\sqrt{xr}}\frac{dt}{\log t}
\int_{\sqrt{x/r}}^{\min(\sqrt{x},x/t)}\frac{du}{\log u}\\
&=
\int_{\sqrt{x}}^{\sqrt{xr}}\frac{dt}{\log t}
\int_{\sqrt{x/r}}^{x/t}\frac{du}{\log u}
+
\Li(\sqrt{x})\Li(\sqrt{x})
-
\Li(\sqrt{x})\Li(\sqrt{x/r})\\
&=
\int_{\sqrt{x}}^{\sqrt{xr}}\frac{\Li(\frac{x}{t})}{\log t}dt
+
\Li(\sqrt{x})^2
-
\Li(\sqrt{xr})\Li(\sqrt{x/r})
\end{align}
and
\begin{align}
\int_{2}^{\sqrt{x/r}}\frac{\Li(ru)}{\log u}du
&=
\int_{2}^{\sqrt{x/r}}\frac{du}{\log u}
\int_{2}^{ru}\frac{dt}{\log t}
=
\int_{2}^{\sqrt{xr}}\frac{dt}{\log t}
\int_{\max(2,t/r)}^{\sqrt{x/r}}\frac{du}{\log u}\\
&=
\int_{2r}^{\sqrt{xr}}\frac{dt}{\log t}
\int_{t/r}^{\sqrt{x/r}}\frac{du}{\log u}
+
\Li(2r)\Li(\sqrt{x/r})\\
&=
-\int_{2r}^{\sqrt{xr}}\frac{\Li(\frac{t}{r})}{\log t}dt
+
\Li(\sqrt{xr})\Li(\sqrt{x/r}).
\end{align}
On inserting the above two formula into \cref{large_int:G}, we obtain \cref{MS:G}.
\end{remark}

\begin{proof}[Proof of~\cref{thm:large}]
Using integration by parts we obtain
\begin{align}
&\int_{4r}^{x}
\left(\log\log ur-\log\log\frac{u}{r}\right)\frac{du}{\log u}
+
\frac{4r\log\log4r}{\log4r}\\
&=
\Li(x)\left(\log\log xr-\log\log\frac{x}{r}\right)
-
\Li(4r)\left(\log\log4r^2-\log\log4\right)\\
&\quad+
\frac{4r\log\log4r}{\log4r}
-
\int_{4r}^{x}\left(\frac{1}{\log ur}-\frac{1}{\log\frac{u}{r}}\right)
\frac{\Li(u)}{u}du\\
&=
\frac{x}{\log x}\left(\log\log xr-\log\log\frac{x}{r}\right)
\left(1+O\left(\frac{1}{\log x}\right)\right)\\
&
\quad
+
2\int_{4r}^{x}\frac{\Li(u)\log r}{u(\log ur)(\log\frac{u}{r})}du
+O\left(\frac{r}{\log4r}\right).
\end{align}
Therefore, by using \cref{lem:r_logr} and \cref{lem:x_logx_exp},
\cref{thm:large_int} implies
\begin{equation}
\label{large:prefinal}
\begin{aligned}
\pi_2(x;r)
&=
\frac{x}{\log x}\left(\log\log xr-\log\log\frac{x}{r}\right)
\left(1+O\left(\frac{1}{\log x}\right)\right)\\
&\quad
+
2\int_{4r}^{x}\frac{\Li(u)\log r}{u(\log ur)(\log\frac{u}{r})}du
+O\left(\frac{x\log r}{(\log x)^2(\log\frac{x}{r})}\right).
\end{aligned}
\end{equation}
For the error term of the first term on the right-hand side, we have
\begin{align}
\frac{x}{(\log x)^2}\left(\log\log xr-\log\log\frac{x}{r}\right)
&=
\frac{x}{(\log x)^2}\int_{x/r}^{xr}\frac{du}{u\log u}\\
&\ll
\frac{x}{(\log x)^2(\log\frac{x}{r})}\int_{x/r}^{xr}\frac{du}{u}
\ll
\frac{x\log r}{(\log x)^2(\log\frac{x}{r})}.
\end{align}
For the integral on the right-hand side
of \cref{large:prefinal}, we have
\begin{align}
\int_{4r}^{x}\frac{\Li(u)\log r}{u(\log ur)(\log\frac{u}{r})}du
&\ll
\int_{4r}^{x}\frac{\log r}{(\log u)^2(\log\frac{u}{r})}du
\\
&\ll
(\log r)
\left(\sqrt{x}
+\frac{1}{(\log x)^2}\int_{\max(4r,\sqrt{x})}^{x}\frac{du}{\log\frac{u}{r}}\right)\\
&\ll
(\log r)
\left(\sqrt{x}
+\frac{r}{(\log x)^2}\Li\left(\frac{x}{r}\right)\right)\\
&\ll
(\log r)
\left(\sqrt{x}
+\frac{x}{(\log x)^2(\log\frac{x}{r})}\right)\\
&\ll
\frac{x\log r}{(\log x)^2(\log\frac{x}{r})}.
\end{align}
On inserting these estimates into \cref{large:prefinal}, we arrive at
\[
\pi_2(x;r)
=
\frac{x}{\log x}
\left(\log\log xr-\log\log\frac{x}{r}\right)
+
O\left(\frac{x\log r}{(\log x)^2(\log\frac{x}{r})}\right).
\]
This completes the proof of \cref{thm:large}.
\end{proof}

\section{Primes in short intervals}
In order to consider the case $r$ is very close to 1,
we need to count the number of primes in short intervals.
In this section, we recall Zaccagnini's result \cite{Zaccagnini}
on the prime number theorem in almost all short intervals
and modify his result slightly to be suitable for our application.

\begin{theorem}[{\cite[Theorem]{Zaccagnini}}]
\label{thm:Zaccagnini}
Let $\epsilon$ be a function defined over $[4,\infty)$ such that
\[
0\le\epsilon(x)\le1/6,\quad
\epsilon(x)\to 0\quad
(x\to\infty).
\]
Then, for $x^{1/6-\epsilon(x)}\le h\le x$ and $x\ge4$,
we have
\[
\int_{x}^{2x}
\left|\pi(t)-\pi(t-h)-\frac{h}{\log t}\right|^2dt
\ll
\frac{xh^2}{(\log x)^2}
\left(\epsilon(x)
+\frac{\log\log x}{\log x}\right)^2,
\]
where the implicit constant is absolute.
\end{theorem}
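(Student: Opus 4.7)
The plan is to reduce the problem for $\pi$ to the analogous mean-square estimate for the von Mangoldt summatory function $\psi$; by partial summation this conversion costs only a factor of $1/\log t$ plus a negligible contribution from prime powers, so it suffices to bound
\[
\int_{x}^{2x}\bigl|\psi(t)-\psi(t-h)-h\bigr|^{2}dt
\ll
\frac{xh^{2}}{(\log x)^{2}}\left(\epsilon(x)+\frac{\log\log x}{\log x}\right)^{2}\cdot(\log x)^{2}.
\]
First I would write $\psi(t)-\psi(t-h)$ via a truncated Perron formula with parameter $T=x/h\cdot(\log x)^{A}$, picking up the main term $h$ plus a contour integral of $-\zeta'/\zeta(s)\,(t^{s}-(t-h)^{s})/s$. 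The mean square over $t\in[x,2x]$ then reduces to a mean square of a Dirichlet polynomial type expression on the line $\Re s=1/2$.

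Next, I would apply Heath-Brown's identity of level $K=3$ to decompose $\Lambda$ into $O(1)$ sums of Dirichlet convolutions whose components are supported in dyadic ranges, and split each resulting $(K+1)$-fold sum according to which variables are ``short'' and which are ``long.'' The short pieces give rise to Type~I sums handled by standard fourth moment estimates for $\zeta$ on the critical line, while the balanced pieces give rise to Type~II bilinear sums, estimated by Halász--Montgomery's large values theorem or Huxley's hybrid zero-density bound $N(\sigma,T)\ll T^{12(1-\sigma)/5}(\log T)^{c}$. The exponent $1/6$ emerges from the fact that with a level-$3$ Heath-Brown decomposition, every bilinear factor has length at least $x^{1/3}$ or at most $x^{2/3}$, and the mean-square Dirichlet polynomial bound of length $N$ on an interval of length $T=x/h$ is nontrivial precisely when $h\ge x^{1/6}$.

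The $\epsilon(x)$-flexibility in the range of $h$ would be accommodated by a Huxley-type refinement of the zero-density estimate which saves an extra $T^{-c\epsilon}$ at cost of losing $(\log T)^{O(1)}$, combined with the observation that $\epsilon(x)\to 0$ allows us to absorb the power loss into the $\epsilon(x)+(\log\log x)/(\log x)$ factor. I would then collect the dyadic contributions, noting that there are $O(\log x)$ dyadic intervals for each variable and that this accounts for the factor $(\log\log x/\log x)^{2}$ in the final bound (via Cauchy--Schwarz applied to the $O((\log x)^{K})$ pieces, each saving $(\log\log x)^{-1}$).

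The main obstacle will be the Type~II bilinear estimate exactly at the critical length $h\approx x^{1/6}$: here one must exploit the extra averaging in $t$ rather than the length of the Dirichlet polynomial, and this is where the Halász--Montgomery apparatus combined with Huxley's density theorem is essential. A secondary technical point is keeping explicit track of the dependence on $\epsilon(x)$ through the Perron truncation and the zero-density estimate, so that the final error indeed degrades like $\epsilon(x)$ rather than like some fixed power saving.
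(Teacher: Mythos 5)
The first thing to note is that the paper does not prove this statement at all: it is quoted verbatim (up to notation) from Zaccagnini's paper \emph{Primes in almost all short intervals}, and the authors use it as a black box (their own work begins only with \cref{lem:Z_shift}, the shifted version). So you are attempting to reprove a deep imported result, and your text is a research plan rather than a proof. As such it has genuine gaps. The most concrete one is your account of where the exponent $1/6$ comes from. In Zaccagnini's argument (and in the classical Selberg--Huxley treatment it refines) the threshold $h\ge x^{1/6}$ arises directly from Huxley's zero-density bound $N(\sigma,T)\ll T^{(12/5)(1-\sigma)}(\log T)^{c}$ fed into the explicit formula for $\psi(t)-\psi(t-h)$: the mean square over $t\in[x,2x]$ is controlled by $h^{2}\sum_{|\gamma|\le x/h}x^{2\beta-2}$, and this is $o(xh^{2})$ precisely when $(x/h)^{12/5}\le x^{2}$, i.e.\ $h\ge x^{1/6}$. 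Your attribution of $1/6$ to the combinatorics of a level-$3$ Heath-Brown decomposition (``every bilinear factor has length at least $x^{1/3}$ or at most $x^{2/3}$'') is not the mechanism here; Heath-Brown's identity and Hal\'asz--Montgomery enter when one pushes \emph{below} $x^{1/6}$ (Harman, Jia, Watt), which is not what this theorem does.

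The second gap is the part that is actually the content of Zaccagnini's theorem: the passage from $h\ge x^{1/6+\epsilon}$ (fixed $\epsilon>0$) to $h\ge x^{1/6-\epsilon(x)}$ with $\epsilon(x)\to0$, together with the explicit error factor $(\epsilon(x)+\log\log x/\log x)^{2}$. You dispose of this with ``a Huxley-type refinement of the zero-density estimate which saves an extra $T^{-c\epsilon}$'' and assert that the $(\log\log x/\log x)^{2}$ factor comes from Cauchy--Schwarz over $O((\log x)^{K})$ dyadic pieces ``each saving $(\log\log x)^{-1}$.'' Neither claim is substantiated, and the second is not where that factor comes from: in Zaccagnini's proof it emerges from optimizing the splitting of the zero sum between the zero-free region (Vinogradov--Korobov) and the density estimate, with the $\epsilon(x)$ term tracking the loss incurred by taking $h$ slightly below $x^{1/6}$. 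Since this quantitative shape of the error is exactly what the paper needs later (it produces the $(\log\log x)^{5/2}$-type error in \cref{thm:small}), a proof that does not actually derive it has not established the statement. If you do not intend to reconstruct Zaccagnini's argument in full, the correct move is the one the paper makes: cite the theorem and prove only the shift and supremum variants (\cref{lem:Z_shift,lem:Z_sup}) that the application requires.
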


We shift Zaccagnini's result as follows.
\begin{lemma}
\label{lem:Z_shift}
Let $\epsilon$ be a function
defined over $[4,\infty)$ such that
\[
0\le\epsilon(x)\le1/6,\quad
\epsilon(x)\to0\quad
(x\to\infty).
\]
Then, for $x^{1/6-\epsilon(x)}\le h\le x$ and $x\ge4$, we have
\[
\int_{x}^{2x}
\left|\pi(t+h)-\pi(t)-\frac{h}{\log t}\right|^2dt
\ll
\frac{xh^2}{(\log x)^2}
\left(\epsilon(x)
+\frac{\log\log x}{\log x}\right)^2,
\]
where the implicit constant is absolute.
\end{lemma}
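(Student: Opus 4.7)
The plan is to deduce the lemma from \cref{thm:Zaccagnini} by a change of variables combined with a careful treatment of the boundary. First, I would substitute $u=t+h$ in the integral on the left-hand side, obtaining
\[
\int_{x+h}^{2x+h}\biggl|\pi(u)-\pi(u-h)-\frac{h}{\log(u-h)}\biggr|^2du.
\]
Since $h\le x$ we have $u-h\ge x\ge h$ throughout, so by Taylor expansion $1/\log(u-h)=1/\log u+O(h/(u\log^2 u))$, and hence replacing $\log(u-h)$ by $\log u$ in the integrand introduces, after squaring and integrating, an additive error of size $O(h^4/(x\log^4 x))$. As $h\le x$, this is bounded by $xh^2(\log\log x)^2/(\log x)^4$, which is absorbed by the target bound.

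After this replacement, since $[x+h,2x+h]\subset[x,3x]$, I split the range of integration as $\int_x^{2x}+\int_{2x}^{3x}$. The first integral is controlled directly by \cref{thm:Zaccagnini} at scale $x$. For the second, I observe that $[2x,3x]\subset[3x/2,3x]$ and apply \cref{thm:Zaccagnini} with $x$ replaced by $3x/2$, producing the bound
\[
\ll\frac{xh^2}{(\log x)^2}\left(\epsilon(3x/2)+\frac{\log\log x}{\log x}\right)^2,
\]
matching the desired order. Note that the trivial sieve bound $\pi(u)-\pi(u-h)\ll h/\log h$ is insufficient for the boundary integral when $h$ is close to $x$, which is why a second application of \cref{thm:Zaccagnini} is needed rather than a direct estimate.

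The main technical subtlety lies in verifying the hypothesis of \cref{thm:Zaccagnini} at the shifted scale: we need $h\ge(3x/2)^{1/6-\epsilon(3x/2)}$, which does not automatically follow from $h\ge x^{1/6-\epsilon(x)}$ for an arbitrary $\epsilon$. I would handle this by harmlessly enlarging $\epsilon$, replacing it with the function $\epsilon^*(x)\coloneqq\max(\epsilon(x),\epsilon(3x/2))+\log(3/2)/\log x$, which still satisfies all conditions imposed on $\epsilon$ in the hypothesis, differs from $\epsilon$ by $O(1/\log x)$, and makes the hypothesis of \cref{thm:Zaccagnini} hold at scales $x$ and $3x/2$ simultaneously. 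With this adjustment the proof is complete; the substitution and dyadic splitting themselves are routine.
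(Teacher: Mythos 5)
Your overall route --- the change of variables, the replacement of $1/\log(u-h)$ by $1/\log u$ at an acceptable cost, and the covering of the shifted interval by dyadic intervals on which \cref{thm:Zaccagnini} applies --- is sound, and it differs only mildly from the paper, which instead enlarges $[x+h,2x+h]$ to $[x+h,2(x+h)]$ and makes a \emph{single} application of \cref{thm:Zaccagnini} at the scale $x_1=x+h$ (so no boundary piece arises at all). Your remark that the sieve bound cannot absorb the boundary contribution when $h$ is close to $x$ is also correct.

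However, the step you yourself identify as the main technical subtlety is carried out incorrectly. The function $\epsilon^*(x)\coloneqq\max(\epsilon(x),\epsilon(3x/2))+\log(3/2)/\log x$ does not verify the hypothesis of \cref{thm:Zaccagnini} at the scale $3x/2$: that hypothesis reads $h\ge(3x/2)^{1/6-\epsilon^*(3x/2)}$, and $\epsilon^*(3x/2)$ is built from $\epsilon(3x/2)$ and $\epsilon(9x/4)$, neither of which is comparable to $\epsilon(x)$ for a general admissible $\epsilon$ (no continuity or monotonicity is assumed); for instance, if $\epsilon(x)=1/6$ while $\epsilon(3x/2)=\epsilon(9x/4)=0$, the required inequality fails for large $x$. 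For the same reason the claim that $\epsilon^*$ ``differs from $\epsilon$ by $O(1/\log x)$'' is false, and this infects the conclusion as well: even if you repaired the hypothesis by putting $\epsilon(2x/3)$ rather than $\epsilon(3x/2)$ into the maximum, the bound returned by \cref{thm:Zaccagnini} at scale $3x/2$ would involve $\epsilon^*(3x/2)\ge\epsilon(3x/2)$, which need not be $\ll\epsilon(x)+\log\log x/\log x$. The correct device, which is what the paper uses, is a \emph{pullback} rather than a maximum: define the auxiliary function at the new scale $y$ by $\epsilon_1(y)\coloneqq\epsilon(2y/3)+\log(3/2)/\log y$ for large $y$ (and $\epsilon_1(y)\coloneqq1/6$ for small $y$), so that $\epsilon_1(3x/2)=\epsilon(x)+O(1/\log x)$ exactly. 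Then $(3x/2)^{1/6-\epsilon_1(3x/2)}=\tfrac{2}{3}(3x/2)^{1/6-\epsilon(x)}\le x^{1/6-\epsilon(x)}\le h$, the hypothesis holds, and the resulting bound is genuinely $\ll xh^2(\log x)^{-2}\left(\epsilon(x)+\log\log x/\log x\right)^2$. With this replacement (and the harmless truncation of $\epsilon_1$ at $1/6$) your argument goes through.
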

\begin{proof}
For small $x$, the stated bound is trivial.
Thus, we may assume that $x$ is sufficiently large.
After a change of variable in the integral, we get
\begin{equation}
\int_{x}^{2x}\left|\pi(t+h)-\pi(t)-\frac{h}{\log t}\right|^2\, dt
=
\int_{x+h}^{2x+h}\left|\pi(t)-\pi(t-h)-\frac{h}{\log (t-h)}\right|^2\, dt.
\end{equation}
Since $x^{1/6-\epsilon(x)}\le h\le x$, we have for $x+h\le t\le 2x+h$,
\begin{equation}
\label{Z_shift:log_shift}
\frac{1}{\log (t-h)}
=\frac{1}{\log t}-\frac{\log (1-h/t)}{\log t\log (t-h)}
=\frac{1}{\log t}+O\left( \frac{1}{(\log t)^2}\right).
\end{equation}
Thus,
\begin{align}
&\int_{x}^{2x}\left|\pi(t+h)-\pi(t)-\frac{h}{\log t}\right|^2dt\\
&\ll
\int_{x+h}^{2(x+h)}\left|\pi(t)-\pi(t-h)-\frac{h}{\log t}\right|^2dt
+O\left( \frac{xh^2}{(\log x)^4}\right).
\end{align}
Note that
\[
h
\ge x^{1/6-\epsilon(x)}
\ge\frac{1}{2}(x+h)^{1/6-\epsilon(x)}
=(x+h)^{1/6-\epsilon_1(x+h)},
\]
where $x_1\coloneqq x+h$ and $\epsilon_1(x_1)$ is defined
by $\epsilon_1(x_1)\coloneqq1/6$ for small $x_1$
and by $\epsilon_1(x_1)\coloneqq\epsilon(x_1-h)+\log 2/\log x_1$
for large $x_1$.
By applying \cref{thm:Zaccagnini}, we obtain
\[
\int_{x}^{2x}\left|\pi(t+h)-\pi(t)-\frac{h}{\log t}\right|^2dt
\ll
\frac{xh^2}{(\log x)^2}
\left(\epsilon(x)+\frac{\log\log x}{\log x}\right)^2.
\]
This completes the proof. 
\end{proof}

We introduce a supremum sign in \cref{lem:Z_shift}
following Saffari and Vaughan \cite{Saffari_Vaughan}.
\begin{lemma}
\label{lem:Z_sup}
Let $\epsilon$ be a function
defined over $[4,\infty)$ such that
\[
0\le\epsilon(x)\le1/6,\quad
\epsilon(x)\to0\quad
(x\to\infty).
\]
Then, for $x^{1/6-\epsilon(x)}\le H\le x$ and $x\ge4$, we have
\begin{equation}
\label{Z_sup:int}
\int_{x}^{2x}
\sup_{0\le h\le H}
\left|\pi(t+h)-\pi(t)-\frac{h}{\log t}\right|^2dt
\ll
\frac{xH^2}{(\log x)^2}
\left(\epsilon(x)
+\frac{\log\log x}{\log x}\right)^2,
\end{equation}
where the implicit constant is absolute.
\end{lemma}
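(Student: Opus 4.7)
My approach is to follow the Saffari--Vaughan dyadic chaining argument \cite{Saffari_Vaughan}: reduce the $L^2$-bound with supremum to a finite family of applications of \cref{lem:Z_shift} at different dyadic scales, combined via Minkowski's inequality.

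Set $H_j\coloneqq H/2^j$ for $j=0,1,\ldots,J$, with $J$ the largest integer satisfying $H_J\ge x^{1/6-\epsilon(x)}$, so that $H_J\asymp x^{1/6-\epsilon(x)}$. First, for any $h\in[0,H]$, I would discretize by writing $h=h_0+h'$ with $h_0\coloneqq\lfloor h/H_J\rfloor H_J$ and $0\le h'<H_J$. The monotonicity of $\pi$ then gives
\[
|\pi(t+h)-\pi(t+h_0)-h'/\log t|\le|\pi(t+h_0+H_J)-\pi(t+h_0)-H_J/\log t|+2H_J/\log t,
\]
so the supremum reduces to one over the discrete set $\{h_0\}$ of multiples of $H_J$, plus a max over length-$H_J$ prime increments at these points (and a small $H_J/\log t$ residue).

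To control the discrete supremum, I would decompose $k\coloneqq h_0/H_J$ by its binary expansion $k=\sum_{j=0}^{J-1}b_j2^j$ and telescope $\pi(t+kH_J)-\pi(t)-kH_J/\log t$ along the partial sums $k_j\coloneqq\sum_{i\ge j}b_i2^i$: this writes the quantity as a sum over the $J$ dyadic levels, each term at level $j$ being an increment of $\pi-\mathrm{id}/\log t$ over an interval of length $H_{J-j}=2^jH_J$ starting at one of at most $2^{J-j}$ admissible positions. Bounding each level by its maximum yields $\sup_{k}|\cdots|\le\sum_{j=0}^{J-1}M_j(t)$, where
\[
M_j(t)\coloneqq\max_{0\le m<2^{J-j}}|\pi(t+(m+1)H_{J-j})-\pi(t+mH_{J-j})-H_{J-j}/\log t|.
\]
For each $M_j$, the inequality $\|M_j\|_{L^2(x,2x)}^2\le\sum_m\int_x^{2x}|\cdots|^2\,dt$, a change of variable $u=t+mH_{J-j}$, the log-replacement \cref{Z_shift:log_shift}, and \cref{lem:Z_shift} applied on the shifted interval---valid since $H_{J-j}\ge H_J\ge x^{1/6-\epsilon(x)}$---combine to give $\|M_j\|_{L^2}^2\ll xH^2/(2^{J-j}(\log x)^2)(\epsilon(x)+\log\log x/\log x)^2$. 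The same analysis with a $j=-1$ level handles the auxiliary length-$H_J$ increment maximum.

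Finally, Minkowski's inequality and the geometric decay $\sum_j 2^{-(J-j)/2}=O(1)$ give $\bigl\|\sum_jM_j\bigr\|_{L^2}\ll(\sqrt{x}H/\log x)(\epsilon(x)+\log\log x/\log x)$, which squared yields the target bound. The main obstacle is the bookkeeping of the binary decomposition, ensuring that each of the $J\asymp\log x$ levels contributes only $O(2^{-(J-j)/2})$ in the $L^2$-norm so the sum does not pick up a spurious logarithmic factor; a secondary subtlety, visible only when $H$ is of the same order as $x^{1/6-\epsilon(x)}$, is that $H_J$ must be chosen slightly below the strict Zaccagnini cutoff (for instance by using $\epsilon_2(x)\coloneqq\epsilon(x)+C\log\log x/\log x$ in place of $\epsilon(x)$) so that the trivial $O(H_J/\log t)$ tail is absorbed into the target error term.
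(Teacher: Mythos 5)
Your proposal is correct, but it takes a genuinely different route from the paper. The paper does not use a dyadic/chaining decomposition at all: it fixes a \emph{single} scale $\mu\coloneqq H(\log x)^{-1}$, splits $[t,t+h]$ into at most $M\asymp\log x$ consecutive blocks of length $\mu$ (replacing the last, shorter block by a full one at the cost of $O(\mu/\log t)$ via \cref{lem:sieve}), and then applies the Cauchy--Schwarz inequality to the sum of $M(h)\le M$ block increments. The factor $M$ lost in Cauchy--Schwarz is exactly compensated because each block is at scale $\mu=H/M$, so $M\cdot M\cdot x\mu^2\asymp xH^2$; the only price is that \cref{lem:Z_shift} must be invoked at the reduced length $\mu$, which forces the shift $\epsilon\mapsto\epsilon+O(\log\log x/\log x)$ and is precisely the source of the $\log\log x/\log x$ term in the bound. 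Your binary-expansion telescoping achieves the same end by summing $O(\log x)$ dyadic levels in $L^2$ via Minkowski, with the geometric factor $2^{-(J-j)/2}$ replacing the Cauchy--Schwarz bookkeeping; you pay the analogous price when you push $H_J$ down to $x^{1/6-\epsilon(x)-C\log\log x/\log x}$ to absorb the $O(H_J/\log t)$ discretization residue, which you correctly identify as the delicate point when $H$ sits at the cutoff. Both arguments are instances of the Saffari--Vaughan maximal-inequality technique and yield identical bounds; the paper's single-scale version is mechanically simpler (no binary expansions, one application of \cref{lem:Z_shift} per block at a common length), while yours is the more standard chaining formulation and makes the geometric decay across scales explicit.
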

\begin{proof}
For small $x$, the lemma is trivial.
Thus, we may assume that $x$ is sufficiently large.
Let $\mu\coloneqq H(\log x)^{-1}\ge x^{1/6-\epsilon(x)}(\log x)^{-1}$.
We use this parameter $\mu$ to measure the length $h$.
Since $\mu$ is independent of $h$,
this enables us to remove the dependence on $h$ in the supremum.
We take a positive integer $M$ such that
\[
(M-1)\mu<H\leq M\mu,
\]
which measures the length $H$.
For an arbitrary positive real number $h\le H$,
we take a positive integer $M(h)$ such that 
\[
\left(M(h)-1\right)\mu <h\leq M(h)\mu,
\]
which measures the length $h$.
Note that $1\leq M(h)\leq M$.
We then introduce a decomposition
\begin{align}
&\pi(t+h)-\pi(t)-\frac{h}{\log t}\\
&=
\sum_{m=1}^{M(h)}
\left(\pi\left(t+\min(m\mu,h)\right)-\pi\left( t+(m-1)\mu \right)
-\frac{\min(m\mu,h)-(m-1)\mu}{\log t}\right).
\end{align}
We next replace $\min(m\mu,h)$ by $m\mu$ in this decomposition.
The case $\min(m\mu,h)=h$ happens only when $m=M(h)$.
For $m=M(h)$, note that
$$M(h)\mu-\min(M(h)\mu,h)=M(h)\mu-h<\mu. $$
Thus, we can replace $\min(m\mu,h)=h$
by using \cref{lem:sieve} as
\begin{align}
&\pi(t+h)-\pi(t)-\frac{h}{\log t}\\
&=
\sum_{m=1}^{M(h)}
\left(\pi\left( t+m\mu\right)-\pi\left( t+(m-1)\mu \right)-\frac{\mu}{\log t}\right)
+O\left(\frac{\mu}{\log t}\right)
\end{align}
since $\log\mu\gg\log x$.
By using the Cauchy-Schwarz inequality, we get 
\begin{align}
&\left|\pi(t+h)-\pi(t)-\frac{h}{\log t}\right|^2\\
&\ll
M(h)\sum_{m=1}^{M(h)}\left|\pi\left( t+m\mu\right)-\pi\left( t+(m-1)\mu \right)-\frac{\mu}{\log t}\right|^2+\frac{\mu^2}{(\log t)^2}\\
&\ll
M\sum_{m=1}^{M}\left|\pi\left( t+m\mu\right)-\pi\left( t+(m-1)\mu \right)-\frac{\mu}{\log t}\right|^2+\frac{\mu^2}{(\log t)^2}.
\end{align}
By taking supremum over  $h$, we find that 
\begin{align}
&\sup_{0\leq h\leq H}\left|\pi(t+h)-\pi(t)-\frac{h}{\log t}\right|^2\\
&\ll
M\sum_{m=1}^{M}\left|\pi\left( t+m\mu\right)-\pi\left( t+(m-1)\mu \right)-\frac{\mu}{\log t}\right|^2+\frac{\mu^2}{(\log t)^2},
\end{align}
where we estimated the case $h=0$ trivially.  Thus, 
\begin{align}
&\int_{x}^{2x}\sup_{0\leq h\leq H}\left|\pi(t+h)-\pi(t)-\frac{h}{\log t}\right|^2dt\\
&\ll
M\sum_{m=1}^{M}\int_{x}^{2x}\left|\pi\left( t+m\mu\right)-\pi\left( t+(m-1)\mu \right)-\frac{\mu}{\log t}\right|^2dt+\frac{x\mu^2}{(\log x)^2}. 
\end{align}
By changing variable via $t+(m-1)\mu=u$ in the integral on the right-hand side,
\begin{align}
&\int_{x}^{2x}\sup_{0\leq h\leq H}\left|\pi(t+h)-\pi(t)-\frac{h}{\log t}\right|^2dt\\
&\ll
M\sum_{m=1}^{M}\int_{x+(m-1)\mu}^{2x+(m-1)\mu}\left|\pi\left( u+\mu\right)-\pi(u)-\frac{\mu}{\log (u-(m-1)\mu)}\right|^2du+\frac{x\mu^2}{(\log x)^2}\\
&\ll
M\sum_{m=1}^{M}\int_{x+(m-1)\mu}^{2x+(m-1)\mu}\left|\pi\left( u+\mu\right)-\pi(u)-\frac{\mu}{\log u}\right|^2du+\frac{M^2x\mu^2}{(\log x)^4}+\frac{x\mu^2}{(\log x)^2},
\end{align}
where we used an estimate similar to \cref{Z_shift:log_shift}.
Since $M\mu\ll H$ and $\mu=H(\log x)^{-1}$,
\begin{align}
&\int_{x}^{2x}\sup_{0\leq h\leq H}\left|\pi(t+h)-\pi(t)-\frac{h}{\log t}\right|^2dt\\
&\ll
M\sum_{m=1}^{M}\int_{x+(m-1)\mu}^{2(x+(m-1)\mu)}\left|\pi\left(u+\mu\right)-\pi( u)-\frac{\mu}{\log u}\right|^2du+\frac{xH^2}{(\log x)^4}.
\end{align}
Using the fact that $x_1\coloneqq x+(m-1)\mu\leq x+H\leq 2x$, and noting that 
\[
\mu= H(\log x)^{-1}
\ge
\frac{1}{2}(2x)^{1/6-\epsilon(x)}(\log x_1)^{-1}
\ge
\frac{1}{2}x_1^{1/6-\epsilon(x)}(\log x_1)^{-1}
=x_1^{1/6-\epsilon_1(x_1)},
\]
where the function $\epsilon_1(x_1)$
is defined by $\epsilon_1(x_1)=1/6$ for small $x_1$
and by
$\epsilon_1(x_1)\coloneqq
\epsilon(x_1-(m-1)\mu)
+
(\log\log x_1+\log 2)/\log x_1$ 
for large $x_1$.
This $\epsilon_1(x_1)$ goes to zero when $x_1\rightarrow \infty.$ Thus, by using Lemma~\ref{lem:Z_shift}, we conclude that
\[
\int_{x}^{2x}\sup_{0\leq h\leq H}\left|\pi(t+h)-\pi(t)-\frac{h}{\log t}\right|^2dt
\ll\frac{xH^2}{(\log x)^2}
\left(\epsilon(x)+\frac{\log\log x}{\log x}\right)^2.
\]
This completes the proof. 
\end{proof}

\section{The proof of Theorem~\ref{thm:small} (The small $r$ case)}
\label{section:small}
By using \cref{lem:Z_sup},
we can now deal with the case where $r$ is very close to $1$.

\begin{proof}[Proof of \cref{thm:small}]
We first consider the case
\begin{equation}
\label{small:r_away_1}
1+\exp(-c\sqrt{\log x})\le r\le\frac{3}{2},
\end{equation}
where $c$ is the same absolute constant as in \cref{thm:large}.
In this case, we just apply \cref{thm:large}.
By \cref{small:r_away_1},
we can estimate the error term of \cref{thm:large} by
\[
\frac{x\log r}{(\log x)^2(\log\frac{x}{r})}
\ll
\frac{x\log r}{(\log x)^3}
\ll
\frac{x\log r}{(\log x)^{5/2}}(\log\log x).
\]
Also, by \cref{small:r_away_1} and \cref{lem:loglog_diff},
we can rewrite the main term of \cref{thm:large} as
\begin{align}
\frac{x}{\log x}\left(\log\log xr-\log\log\frac{x}{r}\right)
&=
\frac{2x\log r}{(\log x)^2}
+
O\left(\frac{x(\log r)^3}{(\log x)^4}\right)\\
&=
\frac{2x\log r}{(\log x)^2}
+
O\left(\frac{x\log r}{(\log x)^{5/2}}(\log\log x)\right).
\end{align}
Thus, \cref{thm:large} implies the assertion provided \cref{small:r_away_1}.
Thus, we may assume
\begin{equation}
\label{small:r_cond}
1+x^{-5/12}<r\le1+\exp(-c\sqrt{\log x}).
\end{equation}
We may also assume $x$ is sufficiently large.
By \cref{large_int:doublesum} and \cref{large_int:min},
\begin{equation}
\label{small:decomp}
\pi_{2}(x;r)
=
\sum_{p\le\sqrt{x}}\sum_{p<q\le rp}1
-
\sum_{\sqrt{x/r}<p\le\sqrt{x}}\sum_{x/p<q\le rp}1
=
S_1-S_2,\quad\text{say}.
\end{equation}
For the latter sum $S_2$,
we have
\begin{equation}
\label{small:S2_ext}
S_2\ll
\sum_{\sqrt{x/r}<p\le\sqrt{x}}\sum_{p<q\le rp}1.
\end{equation}
For the inner sum,
if $(r-1)p>2x^{1/24}$, then \cref{lem:sieve} gives
\[
\sum_{p<q\le rp}1
\ll
\frac{(r-1)p}{\log((r-1)p)}
\ll
\frac{(r-1)\sqrt{x}}{\log x}
\]
and if $(r-1)p\le2x^{1/24}$, then by \cref{small:r_cond},
\[
\sum_{p<q\le rp}1
\ll
(r-1)p+1
\ll
x^{1/24}
\ll
\frac{x^{1/12}}{\log x}
\ll
\frac{(r-1)\sqrt{x}}{\log x}.
\]
Therefore, by substituting these estimates into \cref{small:S2_ext},
\begin{equation}
\label{small:S2_pre}
S_2
\ll
\frac{(r-1)\sqrt{x}}{\log x}
\sum_{\sqrt{x/r}<p\le\sqrt{x}}1.
\end{equation}
Similarly, if $(1-1/\sqrt{r})\sqrt{x}>2x^{1/24}$,
then \cref{lem:sieve} gives
\[
\sum_{\sqrt{x/r}<p\le\sqrt{x}}1
\ll
\frac{(1-1/\sqrt{r})\sqrt{x}}{\log x}
\ll
\frac{(r-1)\sqrt{x}}{\log x}
\]
and if $(1-1/\sqrt{r})\sqrt{x}\le2x^{1/24}$,
then by using the assumption \cref{small:r_cond}, we have
\[
\sum_{\sqrt{x/r}<p\le\sqrt{x}}1
\ll
\left(1-\frac{1}{\sqrt{r}}\right)\sqrt{x}+1
\ll
x^{1/24}
\ll
\frac{(r-1)\sqrt{x}}{\log x}.
\]
By substituting this estimate into \cref{small:S2_pre},
\begin{equation}
\label{small:S2}
S_2
\ll
\frac{(r-1)^2x}{(\log x)^2}.
\end{equation}
For the sum $S_1$, we decompose as
\begin{equation}
\label{small:S1_decomp}
\begin{aligned}
S_1
&=
\sum_{p\le\sqrt{x}}\left(\pi(rp)-\pi(p)\right)\\
&=
\sum_{p\le\sqrt{x}}
\frac{(r-1)p}{\log p}
+
\sum_{p\le\sqrt{x}}
\left(\pi(rp)-\pi(p)-\frac{(r-1)p}{\log p}\right)
=
S_{3}+S_{4},\quad\text{say}.
\end{aligned}
\end{equation}
We next estimate $S_{4}$.
By using the Cauchy--Schwarz inequality,
\begin{equation}
\label{small:CS}
S_{4}
\ll
\left(\frac{\sqrt{x}}{\log x}\right)^{1/2}
S_{5}^{1/2},
\end{equation}
where
\[
S_{5}
\coloneqq
\sum_{2\le n\le\sqrt{x}}
\left|\pi(rn)-\pi(n)-\frac{(r-1)n}{\log n}\right|^2.
\]
We next decompose the sum $S_{5}$ as
\begin{equation}
\label{small:S5decomp}
S_{5}
=
\sum_{2\le n\le\sqrt{x}(\log x)^{-2}}
+
\sum_{\sqrt{x}(\log x)^{-2}<n\le\sqrt{x}}
=
S_{51}+S_{52},\quad\text{say}.
\end{equation}
We estimate $S_{51}$ trivially as
\begin{equation}
\label{small:S51}
S_{51}
\ll
\frac{(r-1)^2x^{3/2}}{(\log x)^{6}}
+
\frac{\sqrt{x}}{(\log x)^2}
\ll
\frac{(r-1)^2x^{3/2}}{(\log x)^6},
\end{equation}
where we used the assumption $r\ge1+x^{-5/12}$.
For the sum $S_{52}$,
we approximate the sum by integral as follows.
For $n\le t\le n+1$, we have
\[
\pi(rn)-\pi(n)-\frac{(r-1)n}{\log n}
=
\pi(rt)-\pi(t)-\frac{(r-1)t}{\log t}
+
O(1).
\]
Therefore,
by taking the integral over $n\le t\le n+1$,
\[
\left|\pi(rn)-\pi(n)-\frac{(r-1)n}{\log n}\right|^2
\ll
\int_{n}^{n+1}
\left|\pi(rt)-\pi(t)-\frac{(r-1)t}{\log t}\right|^2
dt
+
1.
\]
By taking the summation over $n$,
we can approixmate $S_{52}$ by integral as
\begin{equation}
\label{small:S52approx}
\begin{aligned}
S_{52}
&\ll
\int_{\sqrt{x}(\log x)^{-2}}^{\sqrt{x}+1}
\left|\pi(rt)-\pi(t)-\frac{(r-1)t}{\log t}\right|^2
dt
+
\sqrt{x}\\
&\ll
\int_{\sqrt{x}(\log x)^{-2}}^{\sqrt{x}}
\left|\pi(rt)-\pi(t)-\frac{(r-1)t}{\log t}\right|^2
dt
+
(r-1)^2x
+
\sqrt{x}.
\end{aligned}
\end{equation}
We next dissect the integral dyadically.
Let $K$ be a positive integer satisfying
\[
2^{-K}\le(\log x)^{-2}<2^{-(K-1)}
\]
and let $x_{k}=\sqrt{x}/2^{k}$.
Then,
\begin{equation}
\label{small:dyadic}
\begin{aligned}
&\int_{\sqrt{x}(\log x)^{-2}}^{\sqrt{x}}
\left|\pi(rt)-\pi(t)-\frac{(r-1)t}{\log t}\right|^2
dt\\
&\ll
\sum_{k=1}^{K}
\int_{x_{k}}^{2x_k}
\left|\pi(rt)-\pi(t)-\frac{(r-1)t}{\log t}\right|^2
dt\\
&\ll
\sum_{k=1}^{K}
\int_{x_k}^{2x_k}
\sup_{0\le h\le 2(r-1)x_k}
\left|\pi(t+h)-\pi(t)-\frac{h}{\log t}\right|^2
dt.
\end{aligned}
\end{equation}
We now apply \cref{lem:Z_sup}
to each of the above integrals.
For every $k$ in the above sum,
the assumption \cref{small:r_cond} gives
\[
x_k
\ge2(r-1)x_k
\ge x^{-5/12}\sqrt{x}(\log x)^{-2}
=x^{1/12}(\log x)^{-2}
\ge x_k^{1/6-\epsilon(x_k)},
\]
where $\epsilon(x)$ is defined by
$\epsilon(x)=1/6$ for small $x$ and by
$\epsilon(x)=4\log\log x/\log x$ for large $x$.
Since this $\epsilon(x)$ satisfies the conditions
of \cref{lem:Z_sup}, we can obtain
\begin{align}
\int_{x_k}^{2x_k}
\sup_{0\le h\le 2(r-1)x_k}
\left|\pi(t+h)-\pi(t)-\frac{h}{\log t}\right|^2
dt
&\ll
\frac{(r-1)^2x_k^{3}}{(\log x_k)^2}
\left(\epsilon(x_k)
+\frac{\log\log x}{\log x}\right)^{2}\\
&\ll
\frac{1}{2^k}
\frac{(r-1)^2x^{3/2}}{(\log x)^4}(\log\log x)^2.
\end{align}
By substituting this estimate into \cref{small:dyadic}, we obtain
\[
\int_{\sqrt{x}(\log x)^{-2}}^{\sqrt{x}}
\left|\pi(rt)-\pi(t)-\frac{(r-1)t}{\log t}\right|^2
\ll
\frac{(r-1)^2x^{3/2}}{(\log x)^4}(\log\log x)^2.
\]
By combining this estimate with \cref{small:S5decomp}, \cref{small:S51} and \cref{small:S52approx},
we arrive at
\[
S_5
\ll
\frac{(r-1)^2x^{3/2}}{(\log x)^4}(\log\log x)^2.
\]
By substituting this estimate into \cref{small:CS},
we therefore find that 
\begin{equation}
\label{small:S4}
S_4
\ll
\frac{(r-1)x}{(\log x)^{5/2}}(\log\log x).
\end{equation}
We then estimate $S_3$.
By using Lemma~\ref{lem:sum3} for $S_3$, we obtain
\begin{equation}
\label{small:S3}
S_3
=
\frac{2(r-1)x}{(\log x)^2}+O\left(\frac{(r-1)x}{(\log x)^3}\right).
\end{equation}
On inserting \cref{small:S4} and \cref{small:S3} into \cref{small:S1_decomp},
\[
S_1
=
\frac{2(r-1)x}{(\log x)^2}+O\left(\frac{(r-1)x}{(\log x)^{5/2}}(\log\log x)\right)
\]
so combining this with \cref{small:decomp} and \cref{small:S2}, we deduce that
\begin{equation}
\pi_{2}(x;r)
=
\frac{2(r-1)x}{(\log x)^2}
+O\left(\frac{(r-1)^2x}{(\log x)^2}
+\frac{(r-1)x}{(\log x)^{5/2}}(\log\log x)\right).
\end{equation}
Since $\log r=(r-1)+O((r-1)^2)$ for $1\le r\le3/2$, this gives
\begin{equation}
\label{small:prefinal}
\pi_{2}(x;r)= \frac{2x\log r}{(\log x)^2}+O\left( \frac{(r-1)^2x}{(\log x)^2}+\frac{x\log r}{(\log x)^{5/2}}(\log\log x)\right).
\end{equation}
We can bound the first error term by using \cref{small:r_cond} as
\begin{equation}
\frac{(r-1)^2x}{(\log x)^2}
\ll\frac{(r-1)x}{(\log x)^2}\frac{1}{(\log x)^{1/2}}
\ll\frac{x\log r}{(\log x)^{5/2}}\log\log x.
\end{equation}
Thus, by \cref{small:prefinal},
we obtain the assertion provided \cref{small:r_cond}.
This completes the proof. 
\end{proof}

\section{Proof of \cref{thm:uniform}}
In this section,
we combine the results obtained in the preceding sections
to prove \cref{thm:uniform}, which provides an asymptotic formula
for $\pi_{2}(x;r)$ available uniformly for a wide range of $r$.

\begin{proof}[Proof of \cref{thm:uniform}]
For the case
$r_0\coloneqq1+\exp(-c\sqrt{\log x})<r\le x/4$,
we just use \cref{thm:large}.
Then, we may bound the error term of \cref{thm:large}
by \cref{lem:loglog_main_error_ratio} to obtain the theorem.
For the case $1+x^{-5/12}<r\leq r_0$,
we use \cref{thm:small} so
it is enough to show that 
\begin{equation}
\label{uniform:uniform_error_estimate_r_small1}
\frac{\log\log x}{(\log x)^{1/2} }\ll \frac{1}{\log\log x}
\end{equation}
and 
\begin{equation}
\label{uniform:uniform_error_estimate_r_small2}
\frac{2x\log r}{(\log x)^2}
=
\frac{x}{\log x}
\left(\log\log xr-\log\log\frac{x}{r}\right)
\left(1+O\left(\frac{1}{\log\log x}\right)\right). 
\end{equation}
The bound \cref{uniform:uniform_error_estimate_r_small1} is trivial.
For \cref{uniform:uniform_error_estimate_r_small2},
\cref{lem:loglog_diff} gives
\[
\log\log xr-\log\log\frac{x}{r}
=
\frac{2\log r}{\log x}
\left(1+O\left(\frac{1}{(\log x)^2}\right)\right).
\]
Then, \cref{uniform:uniform_error_estimate_r_small2}
follows immediately.
This completes the proof.
\end{proof}

\section{Bias in the distribution of the RSA integers}
\label{section:bias}
In this section,
we consider the bias in the distribution of the RSA integers
and prove \cref{thm:bias}.
We mainly follow the argument of Dummit, Granville,
and Kisilevsky \cite{Dummit_Granville_Kisilevsky}.
However,
since the resulting bias will be sensitive to the size of $r$,
we need to introduce careful treatments based on the preceding sections.
The bias will appear only for $r$ close to $x$.
Thus, we introduce a new variable $s$ by
\[
s\coloneqq\frac{x}{r}
\]
as in the statement of \cref{thm:bias}.
Also, recall that in \cref{thm:bias}, we assume \cref{PNT_assump}, \cref{delta1}, \cref{delta2} and \cref{delta3}
as given in the introduction.

\begin{proof}[Proof of \cref{thm:bias}]
In this proof, every implicit constant will depend on $Q$
even without special mention. Let
\[
\pi_{2,Q}(x;r)
\coloneqq
\sum_{\substack{pq\le x\\p<q\le rp\\(pq,Q)=1}}1,\quad
\pi_{2,\chi,\eta}(x;r)
\coloneqq
\sum_{\substack{pq\le x\\p<q\le rp\\\chi(p)=\chi(q)=\eta}}1.
\]
Similarly to \cref{large_int:doublesum},
\begin{equation}
\label{bias:first_expansion}
\pi_{2,\chi,\eta}(x;r)
=
\sum_{\substack{p\le\sqrt{x}\\\chi(p)=\eta}}
\sum_{\substack{p<q\le\min(rp,x/p)\\\chi(q)=\eta}}1.
\end{equation}
By \cref{PNT_assump}
and recalling that $\chi$ is quadratic, we have
\[
\sum_{\substack{q\le y\\\chi(q)=\eta}}1
=
\frac{1}{2}
\sum_{\substack{q\le y\\(q,Q)=1}}1
+
\frac{\eta}{2}
\sum_{q\le y}\chi(q)
=
\frac{1}{2}
\sum_{\substack{q\le y\\(q,Q)=1}}1
+
O\left(\frac{y}{\log y}\delta(y)\right)
\]
for $y\ge2$.
Thus, by writing
\[
R(x)\coloneqq\frac{x}{\log x}\delta(x)
\]
and using \cref{delta1}, we obtain
\begin{equation}
\label{bias:decomp_pre}
\begin{aligned}
\pi_{2,\chi,\eta}(x;r)
&=
\frac{1}{2}
\sum_{\substack{p\le\sqrt{x}\\\chi(p)=\eta}}
\sum_{\substack{p<q\le\min(rp,x/p)\\(q,Q)=1}}1
+
O\left(\sum_{p\le\sqrt{x}}R\left(\min\left(rp,\frac{x}{p}\right)\right)\right)\\
&=
S_1+O\left(\sum_{p\le\sqrt{x}}R\left(\min\left(rp,\frac{x}{p}\right)\right)\right),\quad\text{say}.
\end{aligned}
\end{equation}
By \cref{large_int:min},
we can decompose the error term as
\begin{equation}
\label{bias:decomp_pre_error}
\sum_{p\le\sqrt{x}}R\left(\min\left(rp,\frac{x}{p}\right)\right)
=
\sum_{p\le\sqrt{x/r}}R(rp)
+
\sum_{\sqrt{x/r}<p\le\sqrt{x}}
R\left(\frac{x}{p}\right).
\end{equation}
By using \cref{delta1}, \cref{delta2} and \cref{delta21},
the former sum is estimated as
\begin{align}
\sum_{p\le\sqrt{x/r}}R(rp)
&\ll
R(\sqrt{xr})\sum_{p\le\sqrt{x/r}}1
\ll
R(\sqrt{xr})\frac{\sqrt{x/r}}{\log s}\\
&\ll
\frac{x}{(\log x)^2(\log s)}\delta(\sqrt{x})(\log\sqrt{x})
\ll
\frac{x}{(\log x)^2}\delta(\sqrt{s}).
\end{align}
By using \cref{delta21}, the latter sum is estimated as
\begin{align}
&\sum_{\sqrt{x/r}<p\le\sqrt{x}}
R\left(\frac{x}{p}\right)
\ll
\delta(\sqrt{x})
\sum_{\sqrt{x/r}<p\le\sqrt{x}}\frac{x}{p\log\frac{x}{p}}\\
&\ll
\frac{\delta(\sqrt{x})}{\log x}
\sum_{\sqrt{x/r}<p\le\sqrt{x}}\frac{x}{p}
\ll
\left(\log\log xr-\log\log\frac{x}{r}\right)
\frac{x}{\log x}\delta(\sqrt{x}).
\end{align}
By substituting these estimates into \cref{bias:decomp_pre_error},
\begin{equation}
\label{bias:R_sum}
\begin{aligned}
&\sum_{p\le\sqrt{x}}R\left(\min\left(rp,\frac{x}{p}\right)\right)\\
&\ll
\frac{x}{(\log x)^2}\delta(\sqrt{s})
+\left(\log\log xr-\log\log\frac{x}{r}\right)\frac{x}{\log x}\delta(\sqrt{x}).
\end{aligned}
\end{equation}
Therefore, by \cref{bias:decomp_pre},
\begin{equation}
\label{bias:decomp}
\begin{aligned}
&\pi_{2,\chi,\eta}(x;r)\\
&=
S_1+O\left(\frac{x}{(\log x)^2}\delta(\sqrt{s})
+\left(\log\log xr-\log\log\frac{x}{r}\right)\frac{x}{\log x}\delta(\sqrt{x})\right).
\end{aligned}
\end{equation}
By recalling again that $\chi$ is quadratic,
\begin{equation}
\label{bias:S1}
\begin{aligned}
S_1
&=
\frac{1}{4}
\sum_{\substack{p\le\sqrt{x}\\(p,Q)=1}}
\sum_{\substack{p<q\le\min(rp,x/p)\\(q,Q)=1}}1
+
\frac{\eta}{4}
\sum_{p\le\sqrt{x}}\chi(p)
\sum_{\substack{p<q\le\min(rp,x/p)\\(q,Q)=1}}1\\
&=
\frac{1}{4}\pi_{2,Q}(x;r)+S_{2},\quad\text{say},
\end{aligned}
\end{equation}
where we used an argument similar to \cref{large_int:doublesum}.
We next decompose $S_2$ as
\begin{equation}
\label{bias:S2}
S_2
=
\frac{\eta}{4}
\sum_{p\le\sqrt{x}}\chi(p)
\sum_{\substack{q\le\min(rp,x/p)\\(q,Q)=1}}1
-
\frac{\eta}{4}
\sum_{p\le\sqrt{x}}\chi(p)
\sum_{\substack{q\le p\\(q,Q)=1}}1
=
S_3+S_4,\quad\text{say}.
\end{equation}
By \cref{PNT_assump} and integration by parts,
we obtain
\[
\pi(x)
=\frac{x}{\log x}+\Li_{2}(x)+O\left(R(x)\right),
\]
where
\[
\Li_{2}(x)\coloneqq\int_{2}^{x}\frac{du}{(\log u)^2}.
\]
Thus, the sum $S_3$ is evaluated as
\begin{equation}
\label{bias:S3_decomp}
\begin{aligned}
S_3
&=
\frac{\eta}{4}
\sum_{p\le\sqrt{x}}\chi(p)
\sum_{q\le\min(rp,x/p)}1
+O\left(\frac{\sqrt{x}}{\log x}\right)\\
&=
S_{31}+S_{32}
+
O\left(
\sum_{p\le\sqrt{x}}R\left(\min\left(rp,\frac{x}{p}\right)\right)
+
\frac{\sqrt{x}}{\log x}\right),
\end{aligned}
\end{equation}
where
\begin{equation}
S_{31}
\coloneqq
\frac{\eta}{4}
\sum_{p\le\sqrt{x}}\chi(p)
\frac{\min(rp,\frac{x}{p})}{\log\min(rp,\frac{x}{p})},\quad
S_{32}
\coloneqq
\frac{\eta}{4}
\sum_{p\le\sqrt{x}}\chi(p)
\Li_{2}\left(\min\left(rp,\frac{x}{p}\right)\right).
\end{equation}
By \cref{large_int:min}, $S_{32}$ can be rewritten as
\[
S_{32}
=
\frac{\eta}{4}
\sum_{p\le\sqrt{x/r}}\chi(p)
\Li_{2}\left(rp\right)
+
\frac{\eta}{4}
\sum_{\sqrt{x/r}<p\le\sqrt{x}}\chi(p)
\Li_{2}\left(\frac{x}{p}\right).
\]
By using \cref{PNT_assump} and \cref{delta1},
the former sum is estimated as
\begin{align}
\sum_{p\le\sqrt{x/r}}\chi(p)
\Li_{2}\left(rp\right)
&=
\int_{2-}^{\sqrt{x/r}}\Li_{2}(ru)d\pi(u,\chi)\\
&=
\Li_{2}(\sqrt{xr})\pi(\sqrt{x/r},\chi)
-
\int_{2}^{\sqrt{x/r}}\pi(u,\chi)
\frac{rdu}{(\log ru)^2}\\
&\ll
\frac{\sqrt{xr}}{(\log x)^2}R(\sqrt{x/r})
+
R(\sqrt{x/r})\int_{2r}^{\sqrt{xr}}\frac{du}{(\log u)^2}\\
&\ll
\frac{x}{(\log x)^2}\delta(\sqrt{s}).
\end{align}
The latter sum is estimated
by using \cref{PNT_assump}, \cref{delta1} and \cref{delta21} as
\begin{align}
&\sum_{\sqrt{x/r}<p\le\sqrt{x}}\chi(p)
\Li_{2}\left(\frac{x}{p}\right)
=
\int_{\sqrt{x/r}}^{\sqrt{x}}
\Li_{2}\left(\frac{x}{u}\right)d\pi(u,\chi)\\
&=
\Li_{2}(\sqrt{x})\pi(\sqrt{x},\chi)
-
\Li_{2}(\sqrt{xr})\pi(\sqrt{x/r},\chi)
+
\int_{\sqrt{x/r}}^{\sqrt{x}}
\pi(u,\chi)\frac{xdu}{u^2(\log\frac{x}{u})^2}\\
&\ll
\frac{x}{(\log x)^3}\delta(\sqrt{x})
+
\frac{x}{(\log x)^2}\delta(\sqrt{s})
+
\frac{x}{(\log x)^2}
\int_{\sqrt{s}}^{\sqrt{x}}\frac{\delta(u)}{u\log u}du
\ll
\frac{x}{(\log x)^2}\Delta(\sqrt{s}).
\end{align}
Therefore, we obtain
\[
S_{32}\ll\frac{x}{(\log x)^2}\Delta(\sqrt{s}).
\]
By substituting this into \cref{bias:S3_decomp}
and using \cref{bias:R_sum}, we obtain
\begin{equation}
\label{bias:S3}
S_3
=
S_{31}
+
O\left(\frac{x}{(\log x)^2}\Delta(\sqrt{s})
+
\left(\log\log xr-\log\log\frac{x}{r}\right)\frac{x}{\log x}\delta(\sqrt{x})\right)
\end{equation}
since \cref{delta1} and \cref{delta2} implies
\[
\frac{x}{(\log x)^2}\delta(\sqrt{s})
\gg
\frac{x}{(\log x)^2}\delta(\sqrt{x})
=
\frac{\sqrt{x}}{\log x}R(\sqrt{x})
\gg
\frac{\sqrt{x}}{\log x}.
\]
For the sum $S_4$, \cref{PNT_assump}, \cref{delta1} 
and \cref{delta21} implies
\begin{equation}
\label{bias:S4}
\begin{aligned}
S_4
&=
\frac{\eta}{4}\sum_{\substack{q\le\sqrt{x}\\(q,Q)=1}}
\sum_{q\le p\le\sqrt{x}}\chi(p)\\
&\ll
R(\sqrt{x})
\sum_{q\le\sqrt{x}}1
\ll
\frac{x}{(\log x)^2}\delta(\sqrt{x})
\ll
\frac{x}{(\log x)^2}\delta(\sqrt{s}).
\end{aligned}
\end{equation}
Combining \cref{bias:decomp}, \cref{bias:S1},
\cref{bias:S2}, \cref{bias:S3} and \cref{bias:S4},
\begin{equation}
\label{bias:abs}
\begin{aligned}
\pi_{2,\chi,\eta}(x;r)
&=
\frac{1}{4}\pi_{2,Q}(x;r)
+
S_{31}\\
&\quad+
O\left(
\frac{x}{(\log x)^2}\Delta(\sqrt{s})
+
\left(\log\log xr-\log\log\frac{x}{r}\right)\frac{x}{\log x}\delta(\sqrt{x})\right).
\end{aligned}
\end{equation}

We then divide both sides of \cref{bias:abs} by $\pi_{2,Q}(x;r)$.
To this end, we shall evaluate $\pi_{2,Q}(x;r)$.
Obviously, we have
\begin{align}
\pi_{2,Q}(x;r)
&=
\pi_{2}(x;r)
+
O\Bigg(
\sum_{\substack{p\le\sqrt{x}\\p\mid Q}}\sum_{p<q\le rp}1
+
\sum_{p\le\sqrt{x}}\sum_{\substack{p<q\le rp\\q\mid Q}}1
\Bigg)\\
&=
\pi_{2}(x;r)
+
O\left(\pi(rQ)+\frac{\sqrt{x}}{\log x}\right)
=
\pi_{2}(x;r)
+
O\left(\frac{r}{\log2r}+\frac{\sqrt{x}}{\log x}\right),
\end{align}
where the implicit constant depends on $Q$.
By the assumption $2\le r\le x/4$
and using
\cref{lem:r_logr}, \cref{lem:x_logx_exp}
and \cref{lem:loglog_main_error_ratio}
\[
\frac{r}{\log2r}
\ll
\frac{x}{\log x}
\left(\log\log xr-\log\log\frac{x}{r}\right)\frac{1}{\log\log x}.
\]
Also, by the assumption $2\le r\le x/4$ and \cref{lem:loglog_diff},
\[
\frac{\sqrt{x}}{\log x}
\ll
\frac{x}{\log x}\frac{\log r}{\log x}\frac{1}{\log\log x}
\ll
\frac{x}{\log x}
\left(\log\log xr-\log\log\frac{x}{r}\right)\frac{1}{\log\log x}.
\]
Therefore, by \cref{thm:uniform},
\[
\pi_{2,Q}(x;r)
=
\frac{x}{\log x}\left(\log\log xr-\log\log\frac{x}{r}\right)
\left(1+O\left(\frac{1}{\log\log x}\right)\right)
\]
provided $2\le r\le x/4$.
Thus, by dividing both sides of \cref{bias:abs}
by $\pi_{2,Q}(x;r)$,
\begin{equation}
\label{bias:ratio}
\frac{\pi_{2,\chi,\eta}(x;r)}{\pi_{2,Q}(x;r)}
=
\frac{1}{4}\left(1+H_{\chi,\eta}(x;r)\right),
\end{equation}
where
\begin{equation}
\begin{aligned}
H_{\chi,\eta}(x;r)
&=
\frac{\eta}{\log\log xr-\log\log\frac{x}{r}}
\left(1+O\left(\frac{1}{\log\log x}\right)\right)\\
&\quad\times
\left(\frac{\log x}{x}S_{\chi}(x;r)
+O\left(\frac{\Delta(\sqrt{s})}{\log x}\right)\right)
+
O(\delta(\sqrt{x}))
\end{aligned}
\end{equation}
with
\begin{align}
S_{\chi}(x;r)
&\coloneqq
\sum_{p\le\sqrt{x}}
\chi(p)\frac{\min(rp,\frac{x}{p})}{\log\min(rp,\frac{x}{p})}.
\end{align}
Our remaining task is to prove
\begin{equation}
\label{bias:Schi}
S_{\chi}(x;r)
=
\mathcal{L}_{\chi}(s)\frac{x}{\log x}
+O\left(\frac{x}{(\log x)^2}\Delta(\sqrt{s})\right).
\end{equation}

By \cref{large_int:min}, we can decompose $S_{\chi}(x;r)$ as
\begin{equation}
\label{bias:Schi_decomp}
S_{\chi}(x;r)
=
\frac{x}{s}
\sum_{p\le\sqrt{s}}
\chi(p)\frac{p}{\log\frac{xp}{s}}
+
\sum_{\sqrt{s}<p\le\sqrt{x}}
\chi(p)\frac{x}{p\log\frac{x}{p}}
=
S_{\chi,1}+S_{\chi,2},\quad\text{say}.
\end{equation}
For the sum $S_{\chi,1}$,
we first decompose the sum as
\[
S_{\chi,1}
\coloneqq
\frac{x}{s}
\sum_{p\le\sqrt{s}}
\chi(p)\frac{p}{\log\frac{xp}{s}}
=
\frac{x}{\log x}\frac{1}{s}
\sum_{p\le\sqrt{s}}
\chi(p)p
+
\frac{x}{\log x}\frac{1}{s}
\sum_{p\le\sqrt{s}}
\chi(p)\frac{p\log\frac{s}{p}}{\log\frac{xp}{s}}.
\]
By using \cref{PNT_assump}
and \cref{delta1},
the latter sum is estimated as
\begin{align}
\sum_{p\le\sqrt{s}}
\chi(p)\frac{p\log\frac{s}{p}}{\log\frac{xp}{s}}
&=
\int_{2-}^{\sqrt{s}}
\frac{u\log\frac{s}{u}}{\log\frac{x}{s}u}d\pi(u,\chi)\\
&=
\frac{\sqrt{s}\log\sqrt{s}}{\log\frac{x}{\sqrt{s}}}
\pi(\sqrt{s},\chi)
-
\int_{2}^{\sqrt{s}}
\frac{\pi(u,\chi)}{\log\frac{x}{s}u}
\left(\log\frac{s}{u}-1-\frac{\log\frac{s}{u}}{\log\frac{x}{s}u}\right)du\\
&\ll
\frac{s}{\log x}\delta(\sqrt{s})
+
R(\sqrt{s})\log s\int_{2}^{\sqrt{s}}\frac{du}{\log\frac{x}{s}u}\\
&\ll
\frac{s}{\log x}\delta(\sqrt{s})
+
\frac{s^{3/2}\delta(\sqrt{s})}{x}
\int_{\frac{2x}{s}}^{\frac{x}{\sqrt{s}}}
\frac{du}{\log u}
\ll
\frac{s}{\log x}\delta(\sqrt{s}).
\end{align}
Therefore, we obtain
\begin{equation}
\label{bias:Schi1}
\begin{aligned}
S_{\chi,1}
&=
\left(\frac{1}{s}\sum_{p\le\sqrt{s}}\chi(p)p\right)
\frac{x}{\log x}
+
O\left(\frac{x}{(\log x)^2}\delta(\sqrt{s})\right).
\end{aligned}
\end{equation}
For the sum $S_{\chi,2}$, we use the decomposition
\begin{equation}
\label{bias:preSchi2}
S_{\chi,2}
=
\left(\sum_{\sqrt{s}<p\le\sqrt{x}}\frac{\chi(p)}{p}\right)
\frac{x}{\log x}
+
\frac{x}{\log x}
\sum_{\sqrt{s}<p\le\sqrt{x}}\frac{\chi(p)\log p}{p\log \frac{x}{p}}.
\end{equation}
For the second sum on the right-hand side,
we use \cref{PNT_assump} and \cref{delta21} to obtain
\begin{align}
&\sum_{\sqrt{s}<p\le\sqrt{x}}\frac{\chi(p)\log p}{p\log \frac{x}{p}}
=
\int_{\sqrt{s}}^{\sqrt{x}}\frac{\log u}{u\log \frac{x}{u}}
d\pi(u,\chi)\\
&=
\frac{\pi(\sqrt{x},\chi)}{\sqrt{x}}
-
\frac{\log\sqrt{s}}{\sqrt{s}\log\frac{x}{\sqrt{s}}}
\pi(\sqrt{s},\chi)
+
\int_{\sqrt{s}}^{\sqrt{x}}
\frac{\pi(u,\chi)}{u^2\log\frac{x}{u}}
\left(\log u-1-\frac{\log u}{\log \frac{x}{u}}\right)du\\
&\ll
\frac{\delta(\sqrt{s})}{\log x}
+
\frac{1}{\log x}\int_{\sqrt{s}}^{\sqrt{x}}\frac{\delta(u)}{u}du
\ll
\frac{\Delta(\sqrt{s})}{\log x}.
\end{align}
By substituting this estimate into \cref{bias:preSchi2},
we obtain
\begin{equation}
\label{bias:Schi2_pre}
S_{\chi,2}
=
\left(\sum_{\sqrt{s}<p\le\sqrt{x}}\frac{\chi(p)}{p}\right)
\frac{x}{\log x}
+
O\left(\frac{x}{(\log x)^2}\Delta(\sqrt{s})\right).
\end{equation}
By \cref{PNT_assump} and \cref{delta21}, we have
\begin{align}
\sum_{p>\sqrt{x}}\frac{\chi(p)}{p}
&=
\int_{\sqrt{x}}^{\infty}\frac{d\pi(u,\chi)}{u}
=
-\frac{\pi(\sqrt{x},\chi)}{\sqrt{x}}
+
\int_{\sqrt{x}}^{\infty}\frac{\pi(u,\chi)}{u^2}du\\
&\ll
\frac{\delta(\sqrt{x})}{\log x}
+
\int_{\sqrt{x}}^{\infty}\frac{\delta(u)}{u\log u}du
\ll
\frac{\Delta(\sqrt{s})}{\log x}.
\end{align}
Thus, we can complete the sum on the right-hand side of
\cref{bias:Schi2_pre} to obtain
\begin{equation}
\label{bias:Schi2}
S_{\chi,2}
=
\left(\sum_{p>\sqrt{s}}\frac{\chi(p)}{p}\right)
\frac{x}{\log x}
+
O\left(\frac{x}{(\log x)^2}\Delta(\sqrt{s})\right).
\end{equation}
By combining \cref{bias:Schi_decomp}, \cref{bias:Schi1} and \cref{bias:Schi2}
and noting
\[
\frac{1}{s}\sum_{p\le\sqrt{s}}\chi(p)p
+
\sum_{p>\sqrt{s}}\frac{\chi(p)}{p}
=
\frac{1}{s}\sum_{p<\sqrt{s}}\chi(p)p
+
\sum_{p\ge\sqrt{s}}\frac{\chi(p)}{p}
=
\mathcal{L}_{\chi}(s),
\]
we get the formula \cref{bias:Schi}.
This completes the proof.
\end{proof}

\section*{Acknowledgement}
The first  author is supported by the Austrian Science Fund (FWF) : Project F5505-N26 and Project F5507-N26, which are part of the special Research Program ``Quasi Monte Carlo Methods: Theory and Application''.
A large part of this work is based on the stay of the second author
in the Institute of Financial Mathematics and Applied Number Theory,
Johannes Kepler University of Linz.
The second author would like to thank
Sumaia Saad Eddin and the staffs of the institute
for their kind support during this stay.
The second author is supported by Grant-in-Aid for JSPS Research Fellow (Grant Number: JP16J00906).


\end{document}